%% file: main.tex
\documentclass[12pt]{amsart}
\input{header}

\gdef\acl{\operatorname{acl}}
\usepackage{enumerate}

\begin{document}

\title{Sum-product Phenomenon Via Dimension}
\author{Yuyan He \and Sergei Starchenko}

\begin{abstract}
        We show a sum-product phenomenon in fields equipped with abstract dimension theories, which simultaneously generalizes the dimensions in geometric theories and Hrushovski's coarse pseudo-finite dimensions. More precisely, we show that for type-definable sets of positive non-zero dimension, non-expansion in dimension of both the sumset and product set implies the existence of a definable field in the same dimension. Main ingredients of the proof include  dimensional analogues of the Ruzsa triangle inequality and the Pl\"unnecke Ruzsa inequality.
\end{abstract}

\maketitle
\begin{section}{introduction}
  For $A\subseteq F$ subset of a field, we write
    \[A+A=\{a+a':a,\,a'\in A\},\, AA=\{a\cdot a':a,\,a'\in A\}.\] Erd\H{o}s and Szemer\'edi showed one of the first example of the sum-product phenomenon
    \begin{theorem}\cite{Erdős1983}\label{erdosszeme}
        There are constants $C,\delta >0$ such that for any finite set $A$ of integers,
        the following inequality holds
        \[\max\{|A+A|,|AA|\}\geq C|A|^{1+\delta}\]
    \end{theorem}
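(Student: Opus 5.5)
The plan is to follow the incidence-geometric route of Elekes, which yields the Erd\H{o}s--Szemer\'edi theorem with the explicit exponent $\delta=1/4$. The inequality is trivial when $|A|$ is bounded, since $|A+A|\ge |A|$ and the constant $C$ absorbs small cases, so we may assume $|A|=n$ is large. Discarding $0$ from $A$ changes $|A|$ by at most one and only shrinks $A+A$ and $AA$, so we may also assume $0\notin A$. The single external tool is the Szemer\'edi--Trotter incidence theorem in $\mathbb{R}^2$: for $P$ points and $L$ lines the number of incidences is at most $O\bigl(P^{2/3}L^{2/3}+P+L\bigr)$. The integers sit inside $\mathbb{R}$, so this applies directly.

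\textbf{Step 1 (the configuration).} Take the point set $\mathcal P=(A+A)\times(AA)$, so that $|\mathcal P|=|A+A|\,|AA|$. For each pair $(a,b)\in A\times A$ consider the line
\[\ell_{a,b}=\{(x,y): y=b(x-a)\}.\]
Since $b\neq 0$, distinct pairs $(a,b)$ give distinct lines, so there are $n^2$ lines. Each line $\ell_{a,b}$ contains the $n$ points $(a+c,\,bc)$ for $c\in A$, and all of these lie in $\mathcal P$. Hence the number of incidences is $I\ge n^3$.

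\textbf{Step 2 (apply Szemer\'edi--Trotter).} Put $P=|A+A|\,|AA|$ and $L=n^2$. Szemer\'edi--Trotter gives
\[n^3\le I\le K\bigl(P^{2/3}n^{4/3}+P+n^2\bigr)\]
for an absolute constant $K$. For large $n$ the term $n^2$ cannot dominate $n^3$. If the term $P$ dominates, then $P\gtrsim n^3$, which is even stronger than what we need. Otherwise $P^{2/3}\gtrsim n^{5/3}$, so $P\gtrsim n^{5/2}$.

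\textbf{Step 3 (conclude).} We have
\[\max\{|A+A|,|AA|\}^2\ge P\gtrsim n^{5/2},\]
so $\max\{|A+A|,|AA|\}\ge C n^{5/4}$. This proves the statement with $\delta=1/4$ and some absolute $C>0$.

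\textbf{The main obstacle.} The real difficulty is the Szemer\'edi--Trotter bound itself, and I would invoke it as a black box. If a self-contained argument were wanted, I would prove it by the crossing-number inequality $\mathrm{cr}(G)\ge e^3/(64v^2)$ for $e\ge 4v$. One builds the graph on $\mathcal P$ whose edges join consecutive points along each line. This graph has $I-L$ edges and at most $L^2$ crossings, and comparing the two bounds gives the incidence estimate.

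The original Erd\H{o}s--Szemer\'edi proof instead uses a combinatorial multiplicative-energy argument with an unspecified small $\delta$. That route is messier, so I prefer Elekes's argument here.
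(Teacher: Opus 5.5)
Your proof is correct. It is Elekes's incidence argument, and the details check out: the lines $\ell_{a,b}$ are pairwise distinct because $b\neq 0$ is the slope and $a$ is the $x$-intercept, each line meets $(A+A)\times(AA)$ in the $n$ points $(a+c,bc)$, and the case split in the Szemer\'edi--Trotter bound gives $|A+A|\,|AA|\gtrsim n^{5/2}$.

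It is, however, a genuinely different route from the paper's, which never argues about $\mathbb{Z}$ directly. The paper deduces Theorem~\ref{erdosszeme} from Corollary~\ref{maincor}, which is itself obtained from Theorem~\ref{main} applied to the coarse pseudo-finite dimension on an ultraproduct. Concretely, fix $r=1/2$ and let $s,N$ be as in the corollary. For $A\subseteq\mathbb{Z}\subseteq\mathbb{Q}$ with $|A|\geq N$ and $\max(|A+A|,|AA|)\leq |A|^{1+s}$, the corollary would produce a subfield of $\mathbb{Q}$ of size at most $|A|^{1+r}$, which is impossible in characteristic $0$. Sets with $|A|<N$ are absorbed by $C=N^{-s}$, since $|A+A|\geq|A|$.

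Your route buys effectivity: an explicit exponent $1/4$ and an absolute constant, modulo one classical theorem. The paper's compactness argument gives no information about $s$ or $N$.

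The paper's route buys uniformity across fields. Szemer\'edi--Trotter rests on the topology of the real plane (Euler's formula in the crossing lemma) and fails over $\mathbb{Z}/p\mathbb{Z}$, so your argument cannot see finite fields. Even over $\mathbb{C}$ it needs a much harder incidence theorem. The dimensional argument is built only from the Ruzsa triangle inequality, the sumset estimate, the weak Balog--Szemer\'edi--Gowers lemma and Katz--Tao. It applies verbatim to any characteristic-zero field, recovers part (1) of Theorem~\ref{bkt}, and identifies a subfield of comparable size as the only obstruction to growth.
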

    
    The above statement fails  for arbitrary integral domain, as in the case  of finite characteristics, taking $A$ to be a finite subfield gives a counterexample. However, failure of the statement can usually be explained by the existance of a finite subfield which is in a sense close to the set $A$. An early result in finite fields is the following theorem by Bourgain, Katz, and Tao
    \begin{theorem}[\cite{BKT}]\label{bkt}
    \begin{enumerate}
        \item For any $\delta\in (0,1)$, there are constants $c(\delta)>0$ and $\epsilon(\delta)>0$ depending only on $\delta$ such that the following holds. Let $p$ be a prime number, and $A\subseteq \Z/p\Z$ such that 
        \[p^\delta\leq |A|\leq p^{1-\delta}.\]
        Then
        \[\max(|A+A|,|AA|)\geq c(\delta)|A|^{1+\epsilon(\delta)}.\]
        \item  For any $\delta\in (0,1)$, there is a constant $C(\delta)>0$ depending only on $\delta$ such that the following holds. Suppose $A\subseteq F$ is a subset of a finite field satisfying $|A|\leq |F|^\delta$, and 
        \[\max(|A+A|,|AA|)\leq K|A|\] for some positive integer $K$. Then, there is a subfield $G\subseteq F$ of size $|G|\leq K^{C(\delta)}|A|$. Moreover, there is $X\subseteq F, |X|\leq K^{C(\delta)}$ and $\xi \in F$ such that $A\subseteq \xi G\cup X$.
    \end{enumerate}
        
    \end{theorem}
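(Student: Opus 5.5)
The Bourgain--Katz--Tao theorem (Theorem~\ref{bkt}) is quoted here from \cite{BKT}, and the paper does not reprove it. I would reconstruct its proof by the classical additive-combinatorial route: first prove part (2), the structural statement, and then deduce part (1) from it.

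\textbf{Step 1: Ruzsa calculus.} Assume $|A+A|,|AA|\le K|A|$. The Ruzsa triangle inequality and the Pl\"unnecke--Ruzsa inequality then give polynomial control on iterated sum and product sets:
\[
|nA-mA|\le K^{O_{n,m}(1)}|A|,
\]
and the analogous bound for products once $0$ is removed from $A$.

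\textbf{Step 2: Balog--Szemer\'edi--Gowers.} Using this theorem, pass to a large subset $A'\subseteq A$ (with $|A'|\ge K^{-O(1)}|A|$) that has small mixed expressions. The target is a bound of the form
\[
|A'A'-A'A'+A'A'|\le K^{O(1)}|A'|.
\]

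\textbf{Step 3: The key set.} Consider
\[
R=\Bigl\{\tfrac{a-b}{c-d}: a,b,c,d\in A'\Bigr\}.
\]
There are two cases.

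\emph{Case 1: $R$ is closed under $x\mapsto x+1$ and $x\mapsto xy$ (up to bounded error).} Then $R$, or a bounded polynomial expression in it, is a subfield $G$. Comparing sizes through the map $(a,b,c)\mapsto a+\xi b$, which is essentially injective for $\xi\notin R$, gives $|G|\le K^{C}|A|$. A covering lemma (Ruzsa covering) then yields $\xi$ and a small set $X$ with $A\subseteq \xi G\cup X$, where $|X|\le K^{C}$.

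\emph{Case 2: some $\xi=\frac{a-b}{c-d}$ has $\xi+1\notin R$ or $\xi r\notin R$ for some $r\in R$.} In this case the sum $A'+\xi A'$ is nearly injective, so it has size about $|A'|^2$. On the other hand, writing $\xi$ in terms of elements of $A'$ bounds this sumset by $K^{O(1)}|A'|$ via Step 1. Together these force
\[
|A|\le K^{O(1)},
\]
which is absorbed into the constant by taking $G$ trivially.

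The hypothesis $|A|\le|F|^\delta$ is what excludes $R=F$ in a degenerate way: in that situation $F$ itself would be the field, and its size is controlled by the hypothesis.

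\textbf{Step 4: Deducing part (1).} In $\mathbb Z/p\mathbb Z$ the only subfields are $\{0\}$ and $\mathbb F_p$. Suppose $\max(|A+A|,|AA|)\le K|A|$ with $K=|A|^{\epsilon}$. Then part (2) would give either $|A|\le K^{C}$ or $p\le K^{C}|A|$. The first fails once $\epsilon<1/C$. The second fails because $|A|\le p^{1-\delta}$ and $\epsilon$ is chosen small in terms of $\delta$.

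\textbf{Main obstacle.} The hardest step is Step 3. One must show that the failure of closure of $R$ under the field operations produces an expression $A'+\xi A'$ that is both large (by injectivity) and small (by Pl\"unnecke). This requires carefully tracking which polynomial expressions in $A'$ each candidate $\xi$ lies in, and the exponent $C(\delta)$ arises from this bookkeeping.
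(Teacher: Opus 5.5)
\textbf{Where your plan stands relative to the paper.} The paper does not prove this theorem. It is quoted from \cite{BKT}, and only part (1) is recovered, at the end, from Corollary~\ref{maincor}. Your plan (prove (2) finitarily, then specialize to $\Z/p\Z$) is therefore a different route. As written, it has two genuine gaps.

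\textbf{The degenerate case.} You dispose of $|A|\le K^{O(1)}$ by ``taking $G$ trivially'', and in Step 4 you call $\{0\}$ a subfield of $\Z/p\Z$. There is no trivial subfield: every subfield contains the prime field. In fact, with the hypothesis as printed, $|A|\le|F|^{\delta}$, part (2) is false. The set $A=\{1\}$ with $K=1$ satisfies all hypotheses and would require a field with at most one element. The hypothesis must be the lower bound $|A|\ge|F|^{\delta}$, as in \cite{BKT}. That is exactly what handles your degenerate case: then $|F|\le|A|^{1/\delta}\le K^{O(1)/\delta}$, so $G=F$, $\xi=1$, $X=\emptyset$ work. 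It is also the source of the $\delta$-dependence of $C(\delta)$. Your closing remark uses the hypothesis in the wrong direction, since $|A|\le|F|^{\delta}$ gives no upper bound on $|F|$. With the corrected (2), your Step 4 is fine: $p\le K^{C}|A|\le K^{C}p^{1-\delta}$ gives $K\ge p^{\delta/C}\ge|A|^{\delta/C}$.

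\textbf{Step 3.} This is the whole content of (2), and it does not work as stated.
Closure ``up to bounded error'' yields no subfield. Since $R$ is finite, use the exact dichotomy: either $R+1\subseteq R$ and $RR\subseteq R$, in which case $R$ is a field (it is closed under negation and inversion, and $x+y=y(xy^{-1}+1)$), or not.

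In the second case, the injective sumset is $A'+(\xi+1)A'$ or $A'+\xi rA'$. It is not $A'+\xi A'$, which cannot be injective since $\xi\in R$. Bounding $A'+\xi rA'$ after clearing denominators requires control of signed sums of triple products from $A'$, which your Step 2 target does not give. This is Tao's approximate-closure argument for $\{x:|A'+xA'|\le K^{O(1)}|A'|\}$, whose exact dimensional analogue is property $(\star)$ in Lemma~\ref{star}.

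In the first case, injectivity for $\xi\notin R$ cannot bound $|G|$; if $G=F$ there is no such $\xi$. You need an energy count instead. Let $E(\xi)$ be the number of solutions of $a+\xi b=c+\xi d$ in $A'$. Then $\sum_{\xi\in G}E(\xi)\le|G||A'|^2+|A'|^4$, so some $\xi\in G=R$ has $\min(|A'|^2,|G|)\lesssim|A'+\xi A'|\le K^{O(1)}|A'|$.

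Finally, Ruzsa covering only places $A$ in $K^{O(1)}$ additive translates of $(c-d)G$. Reaching $A\subseteq\xi G\cup X$ needs a further argument using the multiplicative structure.

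\textbf{Part (1) alone, and the paper's route.} For part (1) none of this is needed. In $\Z/p\Z$ the dichotomy is just $R\neq\Z/p\Z$ versus $R=\Z/p\Z$. In the first case, since $0\in R$, some $\xi\in R$ has $\xi+1\notin R$, and $|A'|^2=|A'+(\xi+1)A'|\le|3A'A'-3A'A'|\le K^{O(1)}|A'|$. The second case is the energy count. So your Steps 1--2 suffice and give an effective $\epsilon(\delta)$.

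The paper argues non-effectively instead. The ultraproduct with the coarse pseudo-finite dimension turns $|X_n+X_n|,|X_nX_n|\le|X_n|^{1+1/n}$ into exact equalities of dimension. So the $(\star)$-elements form an honest definable field (Theorem~\ref{main}, Lemma~\ref{fwagner}) with no losses to track. \L o\'s's theorem then transfers this to a subfield $E\subseteq\Z/p\Z$ with $|A|^{1-\delta}\le|E|\le|A|^{1+\delta}$. Hence $E=\Z/p\Z$ and $|A|\ge p^{1/(1+\delta)}>p^{1-\delta}$; the bound $|A|\ge p^{\delta}$ is used only to make $|A|$ large. That route gives no polynomial dependence on $K$ and nothing toward (2).
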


One can understand the sum-product phenomenon as saying that small expansion in size of both the sumset and productset indicates the existence of a subfield of comparable size. We prove a result of this form, where the notion of size is given by a model theoretic notion of continuous real dimensions. 

Importantly, we require that the dimension could be encoded in the first order logic of the problem. Then, using the model theoretic concept of generic types with respect to this dimension, we give a simple description of the subfield as a definable set.
\begin{theorem*}[Theorem~\ref{main}]
   Let $F$ be a sufficiently saturated model of a theory expanding the theory of fields, equipped with a continuous real  dimension $\delta$. Let $X$ be a complete type over a small parameter set $A$ satisfying $0<\delta(X+X)=\delta(XX)=\delta(X)<\infty$. Let $p$  be a generic type on $X$ over $A$. Then $\frac{p-p}{p-p}$ is a definable subfield, and $\delta\left(\frac{p-p}{p-p}\right)=\delta(X)$.
\end{theorem*}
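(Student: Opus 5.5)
The plan has two stages. First, show that the set of differences $S=p-p$ (the realizations $a-b$ with $a,b\models p$ independent in the dimension sense, or simply all differences of realizations) already has dimension $\delta(X)$. Second, show that the field generated by $S$ is reached after one quotient, so that $\frac{p-p}{p-p}$ is closed under the field operations and is definable.

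\textbf{Stage one.} I will use the dimensional Ruzsa triangle and Plünnecke--Ruzsa inequalities established in the paper. Since $\delta(X+X)=\delta(X)$, these give $\delta(kX-lX)=\delta(X)$ for all $k,l$. Multiplicatively, $\delta(XX)=\delta(X)$ together with $0\notin$ (a generic part of) $X$ gives $\delta(X^kX^{-l})=\delta(X)$. Genericity of $p$ means $\delta(p)=\delta(X)$, computed for instance as $\delta$ of the type.

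Next I will show that the stabilizer-like objects have full dimension. For generic independent $a,b,c,d\models p$, the element $\frac{a-b}{c-d}$ lies in $\frac{p-p}{p-p}$. Using additivity of $\delta$ over independent realizations (the fibration/additivity axiom of the dimension theory), I get
\[
\delta\Big(\frac{p-p}{p-p}\Big)\le \delta\Big(\frac{X-X}{X-X}\Big).
\]
The substantive point is to bound the right-hand side by $\delta(X)$, using sum-product style identities.

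\textbf{Stage two.} This is the main obstacle: showing that $K:=\frac{p-p}{p-p}$ is closed under $+$ and $\cdot$. My approach is the classical Tao/Bourgain--Katz--Tao argument in its "generic" form. Consider the set
\[
Q=\{\xi : \delta(p+\xi p)=\delta(X)\},
\]
where $\delta(p+\xi p)$ means the dimension of $a+\xi b$ for $a,b\models p$ independent over $A\xi$. I expect $Q\supseteq K$, and $Q$ to be closed under addition, multiplication, and inversion. This uses the Ruzsa inequalities applied to $p+\xi p$, exactly as in the finite case: for instance $p+(\xi+\eta)p$ is covered via $p+\xi p+\eta p$.

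For the reverse inclusion I split into cases on $\xi$. If $\xi\notin K$, then the map $(a,b)\mapsto a+\xi b$ on $p\otimes p$ is generically injective, because a collision $a+\xi b=a'+\xi b'$ forces $\xi=\frac{a-a'}{b'-b}\in K$. Additivity of $\delta$ then gives $\delta(p+\xi p)=2\delta(X)>\delta(X)$, using $\delta(X)>0$ and $\delta(X)<\infty$. Hence $Q\subseteq K$, so $K=Q$ is a field. Genericity of $p$ is what ensures that independent realizations behave as in the counting argument.

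\textbf{Definability and dimension.} $K$ is type-definable over $A$ as the image of a type. Being a field with $\delta(K)=\delta(X)<\infty$ (upper bound from stage one), a standard compactness argument shows it is definable. Concretely, $K$ is an intersection of definable sets $D_i$, and the dimension bound forces finitely many translates $\xi D_i$ to cover, as in generic-type group arguments. The lower bound $\delta(K)\ge\delta(X)$ comes from $p\subseteq \xi K$ after translating so that $0$ is handled, via $a-b\in K$ with $b$ fixed. The step I expect to be delicate is verifying that the dimension axioms allow the injectivity argument; if they do not, I will fall back to a Plünnecke bound on $\delta(p+\xi p)$.
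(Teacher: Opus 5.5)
\textbf{There is a genuine gap: the sum--product step is missing.} Your Stage two is in fact the paper's argument, and it is fine as far as it goes. Take the set version $Q=\{\xi:\delta(\xi p+p)=\delta(p)\}$. Then $Q$ is exactly the set of elements with property $(\star)$ for $Y=a^{-1}p$, because $\delta(\xi Y+Y)=\delta(\xi p+p)$. Closure under the field operations is Claims A--D of Lemma~\ref{star}, which use only $\delta(p+p)=\delta(p)$, the triangle inequality and the sumset estimate. Your collision argument is the paper's proof that $Q\subseteq\frac{p-p}{p-p}$.

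But this only shows that $Q$ is \emph{some} subfield of $K=\frac{p-p}{p-p}$: it contains $1$ and possibly little else. The content of the theorem is the reverse inclusion $K\subseteq Q$, equivalently the upper bound you postpone in Stage one. For both you offer only ``I expect'' and ``sum-product style identities''. This cannot be routine, because it is the one place where $\delta(XX)=\delta(X)$ must interact with addition, and additive smallness alone does not suffice. For example, in the coarse pseudo-finite dimension, intervals $\{1,\dots,N\}\subseteq\mathbb{Q}$ satisfy $\delta(X+X)=\delta(X)$, yet $\mathbb{Q}$ has no finite subfields. Concretely, since $\frac{a-b}{c-d}=\frac{a/e-b/e}{c/e-d/e}$ for a fixed $e\models p$, you need $a/e\in Q$ for all $a\models p$. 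That means $\delta(ap+ep)\le\delta(X)$, a bound on $\delta(pp\pm pp)$ that nothing in your plan produces.

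The paper fills exactly this hole with the dimensional Katz--Tao lemma, $\delta(pp-pp)=\delta(X)$, which has three ingredients you would need to supply:
\begin{itemize}
\item a weak Balog--Szemer\'edi--Gowers lemma, writing $a-b=(c_0-d_0)-(c_1-d_1)+(c_2-d_2)$ with $c_0,\dots,d_2$ a $6$-gon over $a-b$;
\item from this and the multiplicative sumset estimate, the bound $\delta((p-q)X^3X^{-2})=\delta(X)$;
\item after writing $ab=\frac{a_1b_1a_3b_3}{a_2b_2}$ via the multiplicative $6$-gon lemma, an identity expressing $ab-a'b'$ as four such terms forming a $4$-gon, which gives $\delta(ab-a'b')\le\delta(X)$.
\end{itemize}
The paper then normalizes to $Y=a^{-1}p$, so that $1\in Y$ and $\delta(YY-Y)=\delta(Y)$. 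This makes every element of $Y$ satisfy $(\star)$, hence $\frac{Y-Y}{Y-Y}=\frac{p-p}{p-p}\subseteq Q$. The bound $\delta(\frac{Y-Y}{Y-Y})\le\delta(Y)$ then follows from the additive and multiplicative sumset estimates.

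Two smaller points. First, bounding $\delta(\frac{X-X}{X-X})$ instead of $\delta(\frac{p-p}{p-p})$ is harmless only because $X$ is a complete type here. For a merely type-definable $X$ it is false: a definable subfield $R$ with $0<\delta(R)<\infty$ plus one point $t\notin R$ satisfies the hypotheses, but $(k,k')\mapsto (t-k)/k'$ is injective on $R\times(R\setminus\{0\})$. Second, for definability the right tool is not a covering-by-translates argument but Wagner's lemma. It reuses your non-injectivity trick on a definable $Z\supseteq K$ with $Z+ZZ$ contained in a definable set of dimension $<2\delta(K)$.
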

The above theorem follows from the following more general statement.
\begin{theorem*}[Theorem~\ref{nonsym-main}]
    Let $F$ be as in the previous theorem. Let $Y$ be a type-definable over $A$ subset satisfying $0<\delta(YY-Y)=\delta(Y)<\infty$. Then $\frac{Y-Y}{Y-Y}$ is a definable field and $\delta(\frac{Y-Y}{Y-Y})=\delta(Y)$.
\end{theorem*}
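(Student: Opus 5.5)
The plan is to run the classical Bourgain--Katz--Tao / Elekes-type argument, with cardinality replaced by $\delta$ and counting arguments replaced by genericity and fibre-dimension computations. We may assume the basic axioms of the dimension theory: additivity over fibres, invariance under definable bijections, monotonicity, and definability of dimension in families. Together with the dimensional Ruzsa triangle and Pl\"unnecke--Ruzsa inequalities, these let us assume nothing beyond what is stated earlier.

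\textbf{Step 1: control the basic sumsets and productsets.} Write $d=\delta(Y)$. From $\delta(YY-Y)=d$ we get, using a point $y_0\in Y$ (or $Y\neq\{0\}$ after discarding $0$, which has dimension $0<d$), the following bounds. Translation and dilation give $\delta(Y-Y)\le d$, since $y_0Y-Y\subseteq YY-Y$. Similarly $\delta(yY-Y)\le d$ for every $y\in Y$. Pl\"unnecke--Ruzsa then gives $\delta(kY-lY)= d$ for all $k,l$. The Ruzsa triangle inequality, applied to $y Y$ and $y'Y$, gives $\delta(yY-y'Y)=d$ for all $y,y'\in Y$. Iterating, every set of the form $\sum_{i} \pm q_i Y$ with $q_i$ products of elements of $Y$ still has dimension $d$.

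\textbf{Step 2: find a field.} Let $D=Y-Y$ and $Q=\frac{D}{D}$. The key dichotomy, as in Elekes/BKT, runs as follows. Suppose $\xi\in Q$ satisfies $\delta(Y+\xi Y)<2d$, i.e.\ the map $Y\times Y\to Y+\xi Y$ has a positive-dimensional fibre. Then, by Step 1, $\delta(Y+\xi Y)=d$. Conversely, if $\xi\notin Q$ and $\xi\neq 0$, then $(a,b)\mapsto a+\xi b$ is injective on $Y\times Y$ (a collision would give $\xi=-(a-a')/(b-b')\in Q$). Hence $\delta(Y+\xi Y)=2d$.

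Show $Q$ is closed under $+$, $\cdot$, and inverse by checking that $\xi+1$, $\xi\xi'$, and $\xi+\xi'$ again yield small $\delta(Y+\cdot\, Y)$. For example, $Y+(\xi+\xi')Y\subseteq Y+\xi Y+\xi' Y$, which has dimension $d$ by Step 1 when $\xi,\xi'$ are written via elements of $Y$. Since dimension $<2d$ forces membership in $Q$, this closes $Q$ under the operations. Here the continuity of $\delta$ and the fact that $2d>d$ (because $d>0$) replace the polynomial losses in BKT.

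\textbf{Step 3: definability and dimension.} The dichotomy shows $Q=\{\xi : \delta(Y+\xi Y)<2d\}\cup\{0\}$. By definability of dimension this is $\bigvee$-definable, while it is also a type-definable image ($D/D$). Hence, by compactness, $Q$ is definable. Finally, $\delta(Q)\ge\delta(Y)=d$ since $Q\supseteq (Y-y_0)/(y_1-y_0)$. For the upper bound, $Q\cdot D\subseteq$ a set of dimension $d$ whenever $Q$ is a field containing $D/D$, because for fixed nonzero $e\in D$ we have $Q=eQ\cdot e^{-1}$ and $eQ$ is covered by sets $\sum q_iY$ from Step 1. So $\delta(Q)=d$.

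\textbf{Main obstacle.} The hardest point is Step 2's closure argument. We need the uniform bound from Step 1 for sets like $Y+(\xi+\xi')Y$ where $\xi$ are quotients of differences rather than products of elements of $Y$. Multiplying through by denominators $(b-b')(c-c')$ reduces this to sets $\sum \pm y_1y_2 Y$, which Step 1 controls. The remaining difficulty is to make this uniform enough to apply definability of dimension.
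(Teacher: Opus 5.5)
Two parts of your route differ from the paper's and are fine. You close $Q$ directly, by clearing denominators and using the injectivity dichotomy, instead of first showing that $\{x:\delta(xY+Y)=\delta(Y)\}$ is a field (Lemma~\ref{star}) and identifying it with $\frac{Y-Y}{Y-Y}$. You also get definability from continuity of $\delta$: it makes $\{\xi:\delta(Y+\xi Y)<2d\}$ a small union of definable sets, and together with type-definability of $D/D$, compactness gives definability without Wagner's lemma (Lemma~\ref{fwagner}) and without the field structure.

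\emph{Gap in the upper bound.} The upper bound $\delta(Q)\le d$ in Step 3 does not work as written. The set $eQ$ is a dilate of $Q$, so $\delta(eQ)=\delta(Q)$ and passing to it gains nothing. Step 1 bounds $\delta(\sum\pm q_iY)$ only for each \emph{fixed} tuple of coefficients. Covering $eQ$, whose elements involve division by elements of $D$, would need infinitely many such sets, and $\delta$ controls only finite unions. The missing step is the paper's. First, $(Y-Y)(Y-Y)\subseteq 2YY-2YY$. Lemma~\ref{sumset} applied to $-Y$ and $YY$ is legitimate since $\delta(-Y+YY)=\delta(YY-Y)=\delta(-Y)$, and it gives $\delta(2YY-2YY)\le d$. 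Hence $D^*=D\setminus\{0\}$ satisfies $\delta(D^*D^*)=\delta(D^*)=d$. The multiplicative sumset estimate, or directly the multiplicative triangle inequality $\delta(D^*)+\delta(D^*/D^*)\le 2\delta(D^*D^*)$, then yields $\delta(Q)\le d$.

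\emph{Gap in Step 1.} The last sentence of Step 1 carries the whole closure argument, playing the role of Lemma~\ref{star}, and ``iterating'' does not prove it. The triangle inequality applied naively to a multi-term sum just returns another sum of the same length. For example, with $X=Y$, $U=y_1Y+y_2Y$, $V=-y_3Y$, the bound involves $\delta(Y-y_1Y-y_2Y)$, again a three-term sum.

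What works is the following. Let $K=\{x:\delta(xY+Y)=d\}$. Show three facts:
\begin{enumerate}
\item $Y\subseteq K$, by the triangle inequality with $X=Y$, $U=yY$, $V=-Y$;
\item $-K=K$, by Remark~\ref{triangleineq-rmk}(1);
\item $KK\subseteq K$, by the triangle inequality with $X=xY$, $U=xyY$, $V=-Y$.
\end{enumerate}
Then prove by induction on $k$ that $\delta(x_1Y+\cdots+x_kY+mY-nY)\le d$ for all $x_i\in K$ and all $m,n\ge 0$. For the inductive step use $X=Y$, $U=x_1Y+\cdots+x_{k-1}Y+mY-nY$, $V=-x_kY$. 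Carrying $mY-nY$ in the induction hypothesis is essential, because $Y-U$ acquires an extra copy of $Y$.

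With this in place Step 2 goes through. Its opening should be stated unconditionally: for every $\xi=\frac{a-a'}{b-b'}\in Q$, $\delta(Y+\xi Y)=\delta((b-b')Y+(a-a')Y)\le\delta(bY-b'Y+aY-a'Y)\le d$.
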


Theorem \ref{main} is then obtained usiing Lemma~\ref{KT}, which shows that for $p$ is as in Theorem \ref{main}, and $a\models p$, the set $Y=a^{-1}p$ satisfies the assumption of Theorem \ref{nonsym-main}

Models of geometric theories and some structures of finite Morley rank are examples of our setting. However, the more interesting case is when the dimension function takes values in the real numbers as oppose to the integers. A guiding example is Hrushovski's coarse pseudo-finite dimension (see \cite{Hru}), which treats asymptotic bounds in the exponent as a dimension in a suitable ultraproduct. 

Theorem~\ref{main}, applied to Hrushovski's coarse pseudo-finite dimension, recovers the first part of the previously mentioned result of Bourgain, Katz, and Tao in finite fields, and recovers Erd\H{o}s and Szemer\'edi's result in the integers. The precise finitary statement that we get is general across all fields.  

\begin{corollary*}[Corrolary~\ref{maincor}]        
For any $\delta\in (0,1)$, there is $\epsilon(\delta) \in (0,1)$ and $N(\delta)\in \N$ such that for any finite subset $A\subseteq F$ of a field, if $|A+A|,|AA|\leq |A|^{1+\epsilon(\delta)}$ and $|A|\geq N(\delta)$, then there is a subfield $E\subseteq F$ satisfying $|A|^{1-\delta}\leq |E|\leq A^{1+\delta}$. Moreover, we can require that $E = \frac{A'-A'}{A'-A'}$ for some $A'\subseteq A$ with $|A'|\geq |A|^{1-r}$
    \end{corollary*}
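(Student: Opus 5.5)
We argue by contradiction, by compactness combined with Hrushovski's coarse pseudo-finite dimension, and then apply Theorem~\ref{main}. Suppose the corollary fails for some $\delta\in(0,1)$. Then for every $n$ there are a field $F_n$ and a finite set $A_n\subseteq F_n$ with
\[|A_n|\geq n,\qquad |A_n+A_n|,\ |A_nA_n|\leq |A_n|^{1+1/n},\]
such that no subfield $E\subseteq F_n$ of the form $E=\frac{A'-A'}{A'-A'}$, with $A'\subseteq A_n$, $|A'|\geq|A_n|^{1-\delta}$, satisfies $|A_n|^{1-\delta}\leq|E|\leq|A_n|^{1+\delta}$. I read the stray exponent $r$ in the statement as $\delta$, or more generally as any fixed constant chosen together with $\epsilon$.

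We form a nonprincipal ultraproduct $F=\prod_{\mathcal U}(F_n,A_n)$ in a language expanding fields by a predicate for $A_n$ and by the counting structure Hrushovski uses (for instance, a sort for an ordered field carrying $\log|D|$ for all internal definable $D$). We then pass to a sufficiently saturated elementary extension. The coarse pseudo-finite dimension is normalized by
\[\delta(D)=\operatorname{st}\big(\log|D|/\log|A|\big),\]
where $A$ is the ultraproduct of the $A_n$. This gives a continuous real dimension on the type-definable (in fact $\bigwedge$-definable) sets, in the sense required for Theorem~\ref{main}. In particular $\delta(A)=1$, and $\delta(A+A)=\delta(AA)=1$ because $|A_n+A_n|\leq|A_n|^{1+1/n}$. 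Since $|A_n|\to\infty$, the set $A$ is infinite.

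$A$ itself is only definable, not a complete type, so we choose a complete type $X\subseteq A$ over a small set $B$ with $\delta(X)=\delta(A)=1$. This uses the standard property of such dimensions that a definable set is covered by its types, so some type attains the full dimension. Then
\[1=\delta(X)\leq\delta(X+X)\leq\delta(A+A)=1,\]
and likewise $\delta(XX)=1$. Let $p$ be a generic type on $X$ over $B$. Theorem~\ref{main} then shows that $E:=\frac{p-p}{p-p}$ is a definable subfield with $\delta(E)=1$.

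Next we transfer back to the finite fields. Since $E$ is definable and is the image of the type-definable set $p$ under the rational map $(x,y,z,w)\mapsto\frac{x-y}{z-w}$, compactness gives a definable set $D\supseteq p$, which we may take inside $A$, such that
\[E=\frac{D-D}{D-D}\quad\text{and}\quad\delta(D)=1.\]
Here $\delta(D)\geq\delta(p)=1$, using that a generic type has full dimension. Shrinking $D$ in the type of $p$ preserves the identity $\frac{D-D}{D-D}=E$, because this quotient is always contained in $E$ and already contains $\frac{p-p}{p-p}=E$. The sets $E$ and $D$ are defined by one formula with parameters, and the statements "$E$ is a subfield", "$E=\frac{D-D}{D-D}$", "$|E|\in[|A|^{1-\delta},|A|^{1+\delta}]$" and "$|D|\geq|A|^{1-\delta}$" are all expressible in the counting language. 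The size conditions hold in the ultraproduct because $\delta(E)=\delta(D)=1$, with strict room since $\delta>0$. By Łoś's theorem they then hold in $(F_n,A_n)$ for $\mathcal U$-many $n$, which contradicts the choice of the $A_n$.

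The main obstacle is this last transfer step. We must make sure that $\frac{p-p}{p-p}$ is realized as $\frac{D-D}{D-D}$ for a single definable $D\subseteq A$ of large size. We also need the language to be set up, as in Hrushovski's construction, so that cardinality comparisons with $|A|^{1\pm\delta}$ are first-order expressible and transfer by Łoś; this requires working in the internal, pseudo-finite setting rather than with an arbitrary abstract dimension.
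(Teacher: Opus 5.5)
Your proposal is correct and follows the paper's own argument. It argues by contradiction with $\epsilon=1/n$ and $N=n$, forms the ultraproduct with the coarse dimension normalized by the ultraproduct of the $A_n$, applies Theorem~\ref{main}, and transfers back by \L os's theorem. Your compactness step, which produces a single definable $D\subseteq A$ in $p$ with $E=\frac{D-D}{D-D}$ and $\delta(D)=1$, usefully makes explicit where the sets $A'$ in the ``moreover'' clause come from; the paper's proof asserts this without comment.
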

    \end{section}
    
\begin{section}{Continuous Real Dimension}
    Let $T$ be a complete first order theory, and let $M\models T$ be a model. By definable sets, we mean definable with parameters unless further specified. A partial type is a consistent collection of formulas, possibly with parameters. We denote by $\operatorname{Def}(M)$ the collection of definable sets defined using parameters from $M$. 
    
    Let $\delta:\operatorname{Def}(M)\rightarrow \R_{\geq 0}\cup\{+\infty\}$ be a function taking values in the non-negative real numbers or infinity. We say that $\delta$ is \emph{a continuous, real-valued dimension} if it satisfies the following axioms:
    \begin{enumerate}[(I)]
    \item For any definable sets $X,Y \in \operatorname{Def}(M)$,
     \begin{enumerate}[(1)]
        \item $\delta(X\cup Y)=\max\{\delta(X),\delta(Y)\}$.
        \item $\delta(X\times Y)=\delta(X)+\delta(Y)$.
        \item $\delta(X)=0$ if $X$ is finite.
        \item $\delta(X\cap Y)\leq \min\{\delta(X),\delta(Y)\}$.
        \item For any definable surjection $f\colon X\to Y$ we have 
        \begin{multline*} \delta(Y)+\inf\{ \delta(f^{-1}(y)\colon y\in Y\} 
        \leq \delta(X) \leq \\\delta(Y)+\sup\{ \delta(f^{-1}(y)\colon y\in Y\}. 
       \end{multline*}
       \end{enumerate}
        \item The function $\delta$ is continuous. I.e., for any formula $\phi(\bar{x},\bar{y})$ and $s<t\in \R$, there is a $\emptyset-$definable $D$ such that 
        \[\{a:\delta(\phi(\bar{x},a))\leq s\}\subseteq D \subseteq \{b:\delta(\phi(\bar{x},b))< t\}. \]
        \end{enumerate}
We will abbreviate and call such a function $\delta$ a dimension, which we always assume to be continuous and real-valued. 

We say that $T$ is a \emph{dimensional theory} if there is a model $M\models T$ and a function $\delta=\delta$ satisfying the above axioms. It is easy to see that, by continuity, $\delta(\phi(\xxx,\aaa))$ is determined by the type of $a$ over $\emptyset$ in the theory $T$. It follows  that $\delta$ extends uniquely to any elementary extensions $N\succeq M$, and passes to elementary equivalent models, and  we simply call $\delta$ the dimension of $T$.

If $T$ expands the theory of fields, we say that it is a theory of dimensional fields, and similarly for expansions of the theory of groups.

A dimension gives rise to associated notions of independence and genericity in the usual way.

\begin{definition}
    Let $M\models T$ be a model of a dimensional theory $T$ with dimension $\delta$. We extend $\delta$ to partial types as follows. Let $\Gamma(\xxx)$ be a partial type over a set of parameters $A\subseteq M$.  We define \[\delta(\Gamma{(\bar x)})=\inf\{\delta(\phi(\xxx)):\Gamma(\xxx)\models \phi(\xxx)\}\] A complete type $p(\bar{x})$ over $A$ extending $\Gamma(\bar{x})$ is a \emph{generic} extension (over A) if $\delta(p(\bar{x}))=\delta(\Gamma(\bar{x}))$. A tuple $\bar{a}\models \Gamma(\bar{x})$ is \emph{generic} in $\Gamma$ over $A$ if it realizes a generic complete type over $A$ extending $\Gamma$.

    For a tuple $\bar{a}$, and a set of parameters $A$, we write $\delta(\bar{a}\,/\,A)$ for $\delta(\tp(\bar{a}\,/\,A))$. Let $\bar{b}$ be another tuple. We say that $\bar{a}$ is \emph{independent} from $\bar{b}$ over $A$ if $\delta(\bar{a}\,/\,\bar{b}A)=\delta(\bar{a}\,/\,A)$, and write $\bar{a}\ind_A \bar{b}$. Note that this is equivalent to saying that $\bar a$ is generic in $\tp(\bar a/A)$ over the larger parameter set $A\bar b$.
\end{definition}

We record more properties of a dimension.

\begin{proposition}
Assume that $M$ is a model of a dimensional theory $T$, with dimension $\delta$.
\begin{enumerate}
     \item(existence) For any tuples $\bar{a},\bar{b}$, and a small set of parameters $C\subset \mathbb{M}$, there is $\bar{a}'\equiv_C \bar{a}$ such that $\bar{a}'\ind _C \bar{b}$.
    \item (additivity) $\delta(\bar{a},\bbb/C)=\delta(\aaa/\bbb C)+\delta(\bbb /C)$.
    \item (symmetry) For $\aaa,\bbb$ such that $\delta(\aaa/C)$, $\delta(\bbb/C)< \infty$, we have $\aaa\ind _C \bbb \iff \bbb \ind_C \aaa$.
    \item Suppose $\bar a \in \acl(\bar b C)$, and $C$ a paramter set. Then $\delta(\bar a/C) \leq \delta(\bar b/C)$ 
    \item (Local Character) For any finite tuple $\bar{a}$, and a set of parameters $C$, there is a subset $C'\subseteq C$ with $|C|\leq \aleph_0$ such that $\bar a\ind _{C'}C$.
    \item (Cauchy Schwarz) For any partial types $X,Y$ and surjective, type-definable function $f:X\rightarrow Y$, there is an inequality
    \[2\delta(X)\leq \delta(Y)+\delta(E),\] where $E =\{(x,x')\in X\times X: f(x)=f(x').\}$
\end{enumerate}
\begin{proof}
    \begin{enumerate}
        \item Follows from axioms 1 and 4.
    \item See \cite[2.10]{Hru}.
    \item Follows from additivity. 
    \item Follows from additivity as well.
    \item As $\delta(\aaa/C)=\min\{\phi(\xxx/C)\in \tp(\aaa/C)\}$, and $\Q$ is dense in $\R$, only countably many $\phi_i(\xxx,c_i)\in \tp(\aaa/C)$, $i\in \omega$ are needed to witness $\delta(\aaa/C)$, and $C'=\{c_i:i\in \omega\}$ works.
    \item We follow the hint in \cite{Hru}. We Work over parameters appearing in the formula defining $f$,  and omit the overline for finite tuples.  Fix $a$ generic in $X$, and let $c = f(a)$. Pick $b\ind_c a$ with $b\equiv_ca$.  
    Then, using additivity, 
    \[\delta(a)+\delta(b)=\delta(f(a))+\delta(f(b))+\delta(a/f(a))+\delta(b/f(b)).\]  As $f(a) = f(b)=c$ and $a\ind_c b$,
    \[\delta(a/f(a))+\delta(b/f(b))=\delta(a,b/c).\] Substitute in $\delta(f(a))=\delta(f(b))=\delta(c)$, we get
    \[\delta(a)+\delta(b) = \delta(c)+\delta(a,b)\leq \delta(Y)+\delta(E).\] 
    \end{enumerate}
\end{proof}
\end{proposition}
\begin{example}
    O-minimal theories and more generaly geometric theories are dimensional, where the dimension is given by $\operatorname{acl}$ dimension. Theory of structures with finite Morley rank is dimensional if Morley rank is definable and satisfies additivity.
\end{example}
\begin{example}[\cite{Hru}]\label{ex:pfdim}
    Let $X_i$ be sets for $i\in \omega$, and $\mathcal{U}$ a non-principal ultrafilter on $\omega$. Define $\mathbf{X} = \prod_{i\rightarrow \mathcal{U}}X_i$ to be the ultraproduct. Pick finite sets $Y_i\subseteq X_i, \, i\in \omega$ such that $\lim_{i\rightarrow \mathcal{U}}|Y_i|=\infty$, and let $\mathbf{Y} = \prod_{i\rightarrow \mathcal{U}}Y_i$. For any $Z_i\subseteq X_i, \, i\in \omega$, define 
    \[\delta_\mathbf{Y} \left(\prod_{i\rightarrow \mathcal{U}}Z_i\right)=\lim_{i\rightarrow \mathcal{U}}\log_{|Y_i|}|Z_i| .\] Under the convention $\log_{a}\infty = \infty$ for finite $a\in \R_{>0}$, the function $\delta_Y$ is defined for all internal sets. 

    Suppose moreover that $X_i's$ are $\mathcal{L}-$ structures for some countable first order language $\mathcal{L}$. Then $\delta_\mathbf{Y}$ satisfies all the axioms for a dimension for the $\mathcal{L}-$ structure $\mathbf{X}$, except continuity. However, continuity can be achieved by adding countably many predicates which name internal sets (see, e.g. \cite{BAYS_2021}).
\end{example}
\end{section}
\begin{section}{Conventions}
     We often work in a sufficiently saturated model of a dimensional theory $T$. We denote by $\delta$ the dimension in the relevant dimensional theory. By definable sets, we mean definable in $\mathcal{L}$, possibly with parameters. Unless stated otherwise, lower case $a,b,c,\dotsc$ denote finite tuples of elements from the model, upper case $A,B,C,\dotsc$ denote small sets of parameters, and $X,Y,Z,\dotsc$ denote definable/type-definable subsets.

      Let $(G,+)$ be an abelian group, and $X,Y$ subsets of $G$. 
    We define the set-wise algebraic operations
    \[X+Y= \{x+y: x\in X,\, y\in Y\},\]
    \[-X=\{-x:x\in X\},\]
    \[nX = \underbrace{X+\cdots + X}_{n \text{ times}}=\{x_1+\cdots +x_n:x_1,\dotsc,x_n\in X\}.\]
    It is important to note that we interpret $nX$ as the $n-$fold power of \emph{set-wise} addition. In the case that $X,Y$ are subsets of a field, we define $XY$, $X^{-1}=\frac{1}{X}$, and $X^n$ for the set-wise multiplication, multiplicative inverse, and multiplicative powers similarly. The symbol $\times$ will be used exclusively for cartesian products. For each polynomial $P(T)\in \Z[T]$, we interpret $P(X)$ using the set-wise operation. And for any rational expression $r(T)=\frac{P(T)}{Q(T)}\in \Z(T)$, 
    \[r(X)=\frac{P(X)}{Q(X)}=\frac{P(X)}{Q(X)\setminus \{0\}}.\]
\end{section}
\begin{section}{Abelian Groups}

 In this section, we prove dimensional analogues of results in additive combinatorics. We assume that $T$ expand the theory of abelian groups.

An important tool that we will use repeatedly is the following version of Ruzsa's triangle inequality. 
  \begin{lemma}[Dimensional Ruzsa Triangle Inequality]\label{triangle}
            Let $X,Y$ and $Z$ be type-definable over $A$ sets in a dimensional group with each of $X,Y$ and $Z$ having finite dimension. Then 
            \[\delta(X)+\delta(Y-Z) \leq \delta(X-Y)+\delta(X-Z).\]
    \end{lemma}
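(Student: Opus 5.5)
The plan is to copy the classical proof of Ruzsa's triangle inequality, which uses the injection $(x,d)\mapsto (x-y_d,\,x-z_d)$ for a fixed representation $d=y_d-z_d$. In place of counting, I will use additivity of $\delta$ on tuples, together with genericity and independence. Throughout I work over $A$ and suppress it from the notation.

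\textbf{Step 1 (choosing the elements).} Since $Y-Z$ is type-definable over $A$, I take $d$ generic in $Y-Z$ over $A$, so $\delta(d)=\delta(Y-Z)$. I then pick $y\in Y$ and $z\in Z$ with $y-z=d$. Next I take $x$ generic in $X$ over $A$. By the existence property I may assume $x\ind_A yz$, so that $\delta(x/yzA)=\delta(X)$. Put $u=x-y\in X-Y$ and $v=x-z\in X-Z$.

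\textbf{Step 2 (two computations of one dimension).} The tuples $(x,y,z)$ and $(u,v,y)$ are interdefinable over $\emptyset$: from $(u,v,y)$ we recover $x=u+y$ and $z=x-v$. Hence they have the same dimension over $A$. This follows from part (4) of the Proposition, applied in both directions.

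For the first tuple, additivity gives
\[\delta(x,y,z)=\delta(x/yz)+\delta(y,z)=\delta(X)+\delta(d)+\delta(y/d).\]
Here I also used $\delta(y,z)=\delta(y,d)=\delta(d)+\delta(y/d)$, valid because $z=y-d$.

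For the second tuple, additivity gives
\[\delta(u,v,y)=\delta(u,v)+\delta(y/uv)\le \delta(X-Y)+\delta(X-Z)+\delta(y/uv).\]
This bound uses $\delta(u,v)\le \delta((X-Y)\times(X-Z))$ and axiom (I)(2).

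\textbf{Step 3 (the key comparison).} I expect the main point to be the inequality $\delta(y/uvA)\le \delta(y/dA)$. It holds because $d=v-u$ is definable from $uv$. Consequently every formula in $\tp(y/dA)$ is implied by $\tp(y/uvA)$, and since $\delta$ of a type is an infimum over the formulas it implies, adding parameters can only decrease the dimension.

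Combining Steps 2 and 3 gives
\[\delta(X)+\delta(d)+\delta(y/d)\le \delta(X-Y)+\delta(X-Z)+\delta(y/d).\]
All the quantities involved are finite, since $X$, $Y$, $Z$ have finite dimension and $\delta(y/d)\le\delta(Y)$. So we may cancel $\delta(y/d)$. Substituting $\delta(d)=\delta(Y-Z)$ then yields the stated inequality $\delta(X)+\delta(Y-Z)\le \delta(X-Y)+\delta(X-Z)$.

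One point needs a brief check: $\delta(Y-Z)$ must be finite for the genericity step to be meaningful. This holds because $Y-Z$ is the image of $Y\times Z$, so axiom (I)(5) gives $\delta(Y-Z)\le \delta(Y)+\delta(Z)<\infty$.
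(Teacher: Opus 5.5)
Your proof is correct and follows essentially the same route as the paper. Both arguments compute the dimension of the triple $(x,y,z)$ (the paper's $(a,b,c)$) through the two maps $(x,y,z)\mapsto(x,y-z)$ and $(x,y,z)\mapsto(x-y,x-z)$, and both rest on the fiber over $(x-y,x-z)$ being no larger than the fiber over $(x,y-z)$ because $y-z=(x-z)-(x-y)$; your bookkeeping via $\delta(y/d)$ and $\delta(x/yz)=\delta(X)$ is slightly leaner than the paper's comparison of $\delta(a,b,c/a,b-c)$ with $\delta(a,b,c/a-b,a-c)$, which needs symmetry of independence along the way.
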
            
    \begin{proof} To simplify notations, we assume $A=\emptyset$.
    Notice that the conclusion of the lemma is equivalent to
    \begin{equation}\label{eq:equiv}
        \delta(X\times(Y-Z)) \leq \delta\bigl((X-Y)\times (X-Z)\bigr).
    \end{equation}    
We consider the functions 

        \[F\colon X\times Y \times Z \rightarrow X\times (Y-Z), 
        \text{ given by } (x,y,z) \mapsto (x,y-z);\]
        and \[G\colon X\times Y\times Z \rightarrow (X-Y)\times (X-Z),  \text{ given by } (x,y,z)\mapsto (x-y,x-z).\]

It suffices to show that $\delta(\im(G))\geq \delta(\im(F))$, as $F$ is surjective. 

Pick $b\in Y$ and $c\in Z$ such that $b-c$ is generic in $Y-Z$. Then pick $a\in X$ generic and independent from $b,c$. Notice that $(a,b-c)$ is generic in $X\times (Y-Z)$. We first aim to show that \[\delta(a,b,c/a,b-c)\geq \delta(a,b,c/ a-b,a-c).\] 

By symmetry and monotonicity, $b,c\ind_{b-c} a$, so 
$\delta(b,c/a,b-c)=\delta(b,c/b-c)$. Thus, 
\[ \delta(a,b,c/a,b-c)=\delta(b,c/a,b-c)=\delta(b,c/b-c).\] 
By monotonicity, 
\begin{align*}
\delta(b,c/b-c) &\geq \delta(b,c/a-b,a-c,b-c) \\ 
&=\delta(a,b,c/a-b,a-c,b-c).\end{align*}\
    
Combine the above eqaulity and inequality to get 
\[ \delta(a,b,c/a,b-c) \geq \delta(a,b,c/a-b,a-c,b-c).\]
Since $b-c\in \acl(a-b,a-c)$, we have
\[ \delta(a,b,c/a,b-c) \geq \delta(a,b,c/a-b,a-c).\]
Secondly, as $\delta(a,b,c,a,b-c)=\delta(a,b,c)=\delta(a,b,c,a-b,a-c)$, using additivity, we conclude that \[\delta(a,b-c)= \delta(a,b,c,a,b-c)-\delta(a,b,c/a,b-c)\]\[\leq \delta(a,b,c,a-b,a-c)-\delta(a,b,c/a-c,a-c)=\delta(a-b,a-c)\]
As $(a,b-c)$ is generic in $X\times (Y-Z)$, we have 
$\delta(X\times (Y-Z))=\delta(a,b-c)$, and \eqref{eq:equiv} follows.
\end{proof}

\begin{remark}\label{triangleineq-rmk}
   The followings are two consequence of the triangle inequality that we will use often.

 Let $X,Y,Z$ be type-definable sets in a dimensional abelian group with dimension $\delta$.
    \begin{enumerate}
        \item Assume  $\delta(X+X)=\delta(X)<\infty $. The triangle ineqaulity gives
        \begin{equation*}
            \begin{split}
                \delta(X)+\delta((-Y)-(-X))&\leq \delta(X-(-X))+\delta(X-(-Y))\\
                &= \delta(X+X)+\delta(X+Y).
            \end{split}
        \end{equation*} Canceling $\delta(X)=\delta(X+X)$ on both sides, we have $\delta(X-Y)\leq \delta(X+Y)$. By symmetry, we have $\delta(X-Y)=\delta(X+Y)$.
        \item Assume  $\delta(X+Y)=\delta(X+Z)=\delta(X)<\infty$. Then 
        \begin{equation*}
            \begin{split}
                \delta(X)+\delta(Y-Z)&\leq \delta(X-(-Z))+\delta(X-(-Y))\\
                &\leq 2\delta(X).
            \end{split}
        \end{equation*} We have $\delta(Y-Z)\leq \delta(X)$.
    \end{enumerate}

\end{remark}

\begin{subsection}{Sumset Estimate}
    
In this section, we prove an analogue of Pl\"unnecke-Ruzsa's sumset estimate (Lemma \ref{sumset}), via an  adaption of the strategy of  \cite{petridis}.

As in Pl\"unnecke's original proof, the proof of the result is reduced to proving a lemma about so called commutative layered graphs. We now define the relevant objects.

Let $\Gamma=(U=U_0\sqcup U_1\sqcup U_{2},E)$ be a type-definable graph  in a dimensional theory, meaning that $U$, $U_0,U_1,U_2$ and $E$ are all given by typle-definable sets, with $E$ being an irreflexive and symmetric relation on $U$. The graph $\Gamma$ is \emph{$3-$layered} if for any $x,y\in U$, such that $xEy$, either $x\in U_i,y\in U_{i+1}$, or $x\in U_{i+1}$, $y\in U_{i}$ for $i=0,1$. 

For $a\in U_i$ with $i=0,1$, let $\im(a)=\{y\in U_{i+1}:aEy\}$. Similarly, for $a\in U_i$ with $i=1,2$, define  $\im^{-1}(a)=\{x\in U_{i-1}: xEa\}$. Observe that these sets are type-definable over $a$ and the parameters for $\Gamma$.
We will be considering type-definable 3-layered graphs that are moreover commutative (\cite{Plünnecke1970}).

\begin{definition}
    Let $\Gamma=(U=U_0\sqcup U_1\sqcup U_{2},E)$ be a type-definable layered graph defined over $A$. We say $\Gamma$ is \emph{definably commutative} if there are definable over $A$ families of functions $\{f_{x,y}(z): (x,y)\in U_0\times U_1\}$, and $\{g_{x,y}(z):(x,y)\in U_1\times U_2\}$ such that 
    \begin{itemize}
        \item For $a\in U_0$ and $b\in \im(a)$, $f_{a,b}$ induces an injection from $\im(b)$ into $\im(a)$, with $f_{a,b}(c)\in \im^{-1}(c)$. 
        \item For $a\in U_{1}$ and $b\in \im(a)$, $g_{a,b}$ induces an injection from $\im^{-1}(a)$ into $\im^{-1}(b)$, with $g_{a,b}(c)\in \im(c)$. 
    \end{itemize}
\end{definition}
\begin{example}\label{groupgraph}
     Let $X,Y\subseteq G$ be subsets of an abelian group. Let $\Gamma=\Gamma_{X,Y} = (U_0\sqcup U_1 \sqcup U_2,E)$ denote the layered graph associated with $X$ and $Y$ which we now define. The vertex set of $\Gamma$ is given by the disjoint union of $U_i = X+iY$, $i = 0,1,2$. For $a\in U_i$, $b\in U_{i+1}$, and $i = 0,1$, the edge relation $aEb$ holds if and only if $b = a+c$ for some $c\in Y$.
    
    The commutativity of $\Gamma$ follows from the group being abelian. Indeed, take $a\in U_0$, $b\in \im(a)$, and fix $c\in Y$ such that $b = a+c$. Then $\im(a) = a+Y$ and $\im (b) = b+Y$. Let $f_{a,b}\colon \im (b)\rightarrow \im (a)$ be $f_{a,b}(x) = x-c$. Using commutativity of the group $G$, it is easy to check that $f_{a,b}$ is well-defined and injective. One can define injections $g_{a,b}:\im^{-1}(a)\rightarrow \im^{-1}(b)$ for $a\in U_1$ and $b\in \im (a)$ in a similar way. 
    
    If $G$ is an abelian group definable in a theory $T$ over $A$, then $\Gamma_{X,Y}$ is moreover definably commutative over $A$.
\end{example}
\begin{lemma}\label{layeredgraph1}
Assume that $T$ is a dimensional theory. Let $\Gamma=(p\sqcup V \sqcup V_1,E)
$ be a type-definable 3-layered graph defined over $A$ that is definably commutative, $p$ being a complete type over $A$. Then $\delta(\im(p))\leq \delta(p)+C$ implies that $\delta(\im^2(p)) \leq \delta(p)+2C$ for any $C\in \R_{\geq 0}$.
\end{lemma}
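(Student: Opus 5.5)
The plan is to argue with generic points and additivity, using the definable commutativity maps to transfer dimension between the layers. All work is over $A$, which I suppress. Since $p$ is a complete type, the quantity $e=\delta(\im(a))$ does not depend on the choice of $a\models p$. First I relate $e$ to $C$. Take $a\models p$ and $b$ generic in $\im(a)$ over $a$. Then $\delta(a,b)=\delta(p)+e$, and also $\delta(a,b)=\delta(b)+\delta(a/b)\le \delta(\im(p))+\delta(a/b)$. From $\delta(\im(p))\le\delta(p)+C$ this gives
\[\delta(a/b)\ \ge\ e-C,\]
so the fibre $\im^{-1}(b)\cap p$ has dimension at least $e-C$ at a generic edge. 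Next, the injection $f_{a,b}\colon \im(b)\to\im(a)$ is definable over $ab$. Hence for any $c\in\im(b)$ the elements $c$ and $f_{a,b}(c)$ are interalgebraic over $ab$, and $\delta(\im(b))\le e$.

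Now take $c$ generic in $\im^2(p)$ and choose a path $a\,E\,b\,E\,c$ with $a\models p$ such that $(a,b)$ is generic over $c$ among such paths. The goal is the bound
\[\delta(c)=\delta(a,b,c)-\delta(a,b/c)\ \le\ \delta(p)+2e-\delta(a,b/c).\]
Here the estimate $\delta(a,b,c)\le\delta(p)+e+\delta(\im(b))\le \delta(p)+2e$ comes from additivity. It would then suffice to prove
\[\delta(a,b/c)\ \ge\ 2(e-C)+(e-\delta(\im(p))+\delta(p))\text{-type corrections},\]
or more simply $\delta(a,b/c)\ge 2e-2C$. Granting this, $\delta(c)\le\delta(p)+2C$, and $\delta(\im^2(p))\le\delta(p)+2C$ follows because $c$ is generic.

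I would obtain the lower bound on $\delta(a,b/c)$ in two steps, one per layer. For the top layer, $g_{b,c}$ injects $\im^{-1}(b)$ into $\im^{-1}(c)$ definably over $bc$. Combined with the fibre bound $\delta(a/b)\ge e-C$, this should give many predecessors of $c$ in $p$ over $bc$, namely $\delta(a/bc)\ge e-C$. The intended route is to choose $a$ generic in $\im^{-1}(b)\cap p$ over $bc$, so that $\delta(a/bc)=\delta(a/b)$. For the middle layer I need $\delta(b/c)\ge e-C$. For this I would use $f_{a,b}$ to replace $c$ by $b'=f_{a,b}(c)\in\im(a)\cap\im^{-1}(c)$, which is interalgebraic with $c$ over $ab$. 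Choosing $b$ and $b'$ independent generics of $\im(a)$ over $a$, I would then compute $\delta(b/c)$ by the same additivity argument as in the first step, applied to the configuration $(a,b,b')$. Symmetry and the interalgebraicity let me exchange $c$ and $b'$ in the relevant dimension counts.

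The main obstacle is this middle step. The map $f_{a,b}$ goes from $\im(b)$ into $\im(a)$ but need not be onto, so it is not clear that an arbitrary generic $b'\in\im(a)$ lifts to some $c\in\im(b)$. The independence bookkeeping therefore has to be set up so that the path $(a,b,c)$ is generic in the set of paths, rather than so that $c$ is generic. I would first try to choose the whole path generically, $a\models p$, then $b$ generic in $\im(a)$ over $a$, then $c$ generic in $\im(b)$ over $ab$. I would then prove $\delta(c)\ge$ the value for a generic point of $\im^2(p)$ by a Cauchy--Schwarz-style argument, as in the Cauchy--Schwarz item of the Proposition, applied to the projection $(a,b,c)\mapsto c$. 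If that fails, I would fall back on Petridis' minimality idea. Since $p$ is complete it is automatically "minimal", and the doubling argument reduces to showing $\delta(\im(\im(a)))\le e+C$ for $a\models p$, which comes from $g$-injections together with the Ruzsa-type counting above.
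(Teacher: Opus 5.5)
There is a genuine gap. The inequality you reduce to, $\delta(a,b/c)\ge 2e-2C$ for $c$ generic in $\im^2(p)$, is false in general. So are both intended ingredients, $\delta(a/bc)\ge e-C$ and $\delta(b/c)\ge e-C$. The fibre bound $\delta(\im^{-1}(b)\cap p)\ge e-C$ that you prove holds only at a \emph{generic} edge, i.e.\ with $b$ generic in $\im(a)$ over $a$. Edges lying on paths into a prescribed generic $c$ need not be generic, and nothing on the $\im^2(p)$ side is uniform.

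For a counterexample, let $K$ be an algebraically closed field of characteristic $0$ with $\delta$ the Morley rank. Let $H=K\times\{0\}\le K^2$, let $p$ be the generic type of $H$, let $Y=H\cup\{(0,1)\}$, and take $\Gamma=\Gamma_{p,Y}$ as in Example~\ref{groupgraph}. Then $\delta(p)=\delta(\im(p))=e=1$, so $C=0$ is allowed. Moreover $\im^2(p)=H\cup(H+(0,1))\cup(p+(0,2))$ has dimension $1$, so $c=(t,2)$ with $t$ generic is generic in $\im^2(p)$. But the only path into $c$ is $a=(t,0)$, $b=(t,1)$. Hence $\delta(a,b/c)=0<2=2e-2C$, $\delta(b/c)=0$, and $\im^{-1}(b)\cap p=\{a\}$, so your bound only yields $\delta(c)\le 3$.

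The fallbacks do not repair this.
\begin{enumerate}
\item The endpoint of a generic path need not be generic in $\im^2(p)$. In $K^3$, take $p$ generic in $K^2\times\{0\}$ and $Y=(K^2\times\{0\})\cup(\{0\}\times\{0\}\times K)$. Generic paths end in $K^2\times\{0\}$, which has dimension $2$, while $\delta(\im^2(p))=3$.
\item The target $\delta(\im(\im(a)))\le e+C$ is false. Take $p$ generic in $K^2$ and $Y$ a parabola: then $e=1$ and $C=0$, but $\delta(a+2Y)=2$.
\end{enumerate}

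The missing idea is to make the top layer a complete type as well, and then to compare edge sets rather than count paths. Fix a generic type $q$ of $\im^2(p)$ and prune the middle layer to
$V'=\{x\in V:\im^{-1}(x)\cap p\neq\emptyset,\ \im(x)\cap q\neq\emptyset\}$.
The maps $f,g$ still witness commutativity; for example, $f_{a,b}(c)$ is adjacent to $a\models p$ and to $c\models q$, so it lies in $V'$. Also $\delta(V')\le\delta(p)+C$. Now $\delta(\im(a))$ is constant on $p$ and $\delta(\im^{-1}(c))$ is constant on $q$.

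Set $D=\delta(q)-\delta(V')$.
\begin{enumerate}
\item Take a generic edge $(a,b)\in E_1$, any $c\models q$ with $bEc$, and the injection $g_{b,c}$. This gives $\delta(E_1)\le\delta(V')+\delta(\im^{-1}(c))\le\delta(E_2)-D$.
\item Take a generic edge $(b',c')\in E_2$, some $a'\models p$ below $b'$, and the injection $f_{a',b'}$. This gives $\delta(E_2)\le\delta(V')+\delta(\im(a'))\le\delta(E_1)+C$.
\end{enumerate}
Hence $D\le C$, and $\delta(\im^2(p))=\delta(q)=D+\delta(V')\le\delta(p)+2C$. Uniformity of the fibre dimensions on both ends is exactly what your path count lacks on the $q$ side.
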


\begin{proof}
       Let $\Gamma$ be as in the lemma. Pick $q$ a generic type in $\im^2(p)$ over $A$. Consider the type-definable set 
       \[V' = \{x\in V:\im^{-1}(x)\cap p \neq \emptyset\text{ and } \im(x)\cap q \neq \emptyset\}.\]
       
       The induced subgraph $\Gamma'$ on $({p\sqcup V' \sqcup q})$ remains a definably commutative graph over $A$, with the same families of definable functions. 
     
       Moreover,
              \[\delta(\im_{\Gamma'}(p))=\delta(\im_{\Gamma}(p)\cap V')\leq \delta(\im_\Gamma(p))\leq \delta(p)+C.\] Without loss of generality, we assume that $\Gamma=\Gamma'$. To simplify notation, we also assume that $A = \emptyset$. Observe that now $V = \im(p)$ and $q = \im(V) = \im^2(p)$.  Write $E = E_1\cup E_2$, where $E_1 = E\restriction ({p\times V})$ and $E_2 = E\restriction (V \times q)$. 

       Define $D = \delta(q)-\delta(V)$. Our goal is to show that $D\leq C$. Indeed, this implies 
       \[\delta(q)-\delta(p)\leq (\delta(q)-\delta(V))+(\delta(V)-\delta(p))\]
       \[\leq (\delta(q)-\delta(\im(p)))+(\delta(\im(p))-\delta(p))\leq D+C\leq 2C.\]
       
       Let $(a,b)\in E_1$ be generic. Note that $a$ is generic in $\im^{-1}(b)$ over $b$, as $\delta(a,b)$ is maximal for $(a,b)\in E_1$. Pick $c\models q$ such that $bE c$.

       By the commutativity of $p\sqcup V \sqcup q$, there is a definable injection $\im^{-1}(b)\hookrightarrow \im^{-1}(c)$, so
       \begin{equation}\label{eq:comm1}
           \delta(a/b)=\delta(\im^{-1}(b))\leq \delta(\im^{-1}(c)).
       \end{equation}  

Now, by \eqref{eq:comm1} and the choice of $D$,
\begin{equation}\label{eq:commgraph1}
    \begin{split}
        \delta(E_1) &= \delta(b)+\delta(a/b) \\
        &\leq \delta(V)+\delta(a/b)\\
        &\leq \delta(q)-D+\delta(\im^{-1}(c))\\
        &\leq \delta(E_2)-D.
    \end{split}
\end{equation}

Let $(b',c')\in E_2 $ be generic, and pick $a'\in \im^{-1}(b')$ generic. By commutativity again, we get
\begin{equation}\label{eq:comm2}
    \delta(c'/\,b')=\delta(\im(b'))\leq \delta(\im (a')).
\end{equation}
Then by choice of $C$ and \eqref{eq:comm2},
\begin{equation}\label{eq:commgraph2}
    \begin{split}
        \delta(E_2) & = \delta(b')+\delta(c'/b')\\
        &\leq \delta(V)+\delta(c/\,b')\\
        &\leq \delta(p)+ C+\delta(\im (a')) \\
        &\leq \delta(E_1)+C.
    \end{split}
\end{equation}

Compare $\eqref{eq:commgraph1}$ and $\eqref{eq:commgraph2}$, we see that $D\leq C$.
\end{proof}

\begin{lemma}[Sumset Estimate]\label{sumset}
            Assume that $T$ is a dimensional theory expanding the theory of abelian groups. Let $X,Y$ be type-definable over $A$ sets such that $\delta(X+Y)=\delta(X)<\infty$. Then $\delta(p+nY)=\delta(X)$ for any generic over $A$ type $p$ on $X$, and $\delta(mY-nY)\leq \delta(X)$, for all $m,n\in \mathbb{N}$.
    \end{lemma}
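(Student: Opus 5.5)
We argue by induction on $n$, running Petridis' argument with the generic type $p$ as the base. Fix a generic type $p$ on $X$ over $A$, so that $\delta(p)=\delta(X)$.

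\textbf{Base step.} Set $C=0$. Since $p\subseteq X$, we have
\[\delta(X)=\delta(p)\le\delta(p+Y)\le\delta(X+Y)=\delta(X).\]
Hence $\delta(p+Y)=\delta(p)+C$.

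\textbf{Inductive step.} Suppose $\delta(p+nY)=\delta(p)$. We consider the layered graph of Example~\ref{groupgraph} with layers
\[p \sqcup (p+nY) \sqcup (p+(n+1)Y),\]
where the edges between consecutive layers are translation by $Y$. Two adjustments are needed to bring it into the form of Lemma~\ref{layeredgraph1}, which only steps from $\im$ to $\im^2$.

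First, we cannot use $\Gamma_{X,nY}$ directly: its second image is $p+2nY$, not $p+(n+1)Y$. So we use the graph with layers $p$, $p+nY$, $p+(n+1)Y$, where the first edges come from $nY$ and the second from $Y$. Commutativity needs the two translation sets to agree, so instead we iterate a single-step version. The cleanest route is to apply Lemma~\ref{layeredgraph1} to $\Gamma_{X,Y}$ restricted to $p\sqcup(p+Y)\sqcup(p+2Y)$. This gives $\delta(p+2Y)\le\delta(p)$.

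Second, to go further we replace $p$ by a generic type $p'$ of $p+Y$. Here we need $\delta(p'+Y)\le\delta(p')$, but this does not follow directly.

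\textbf{Better route: strengthen to translates by $kY$ for all $k$.} We prove by induction on $k$ that $\delta(p+2^kY)=\delta(p)$. Apply Lemma~\ref{layeredgraph1} to the graph $\Gamma_{X,2^kY}$ restricted to base $p$, with $C=0$:
\begin{itemize}
\item the inductive hypothesis gives $\delta(\im(p))=\delta(p+2^kY)=\delta(p)$;
\item the lemma then yields $\delta(p+2^{k+1}Y)\le\delta(p)$.
\end{itemize}
Commutativity holds since $2^kY$ is type-definable and the group is abelian. For $0\le n\le 2^k$, pick $y_0\in(2^k-n)Y$; then $p+nY+y_0\subseteq p+2^kY$. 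Translation preserves dimension, as it is a definable bijection, so by axiom (5) $\delta(p+nY)\le\delta(p)$. Combined with $\delta(p+nY)\ge\delta(p)$, this gives $\delta(p+nY)=\delta(X)$.

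\textbf{Bound on $mY-nY$.} Apply Remark~\ref{triangleineq-rmk}(2) in the form of Lemma~\ref{triangle} with the sets $p$, $mY$, $nY$:
\[\delta(p)+\delta(mY-nY)\le\delta(p-(-mY))+\delta(p-(-nY))=\delta(p+mY)+\delta(p+nY)=2\delta(p).\]
Hence $\delta(mY-nY)\le\delta(X)$.

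Lemma~\ref{triangle} requires type-definable sets. A complete type is type-definable over $A$, and $p+mY$ is type-definable as the image of $p\times Y^m$. The finiteness hypotheses hold because $\delta(mY)\le\delta(p+mY)<\infty$.

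\textbf{Expected obstacle.} The main point to check is that Lemma~\ref{layeredgraph1} applies with base $p$ and translation set $2^kY$. This requires the induced graph on $p\sqcup(p+2^kY)\sqcup(p+2^{k+1}Y)$ to be definably commutative over $A$. The hypothesis of that lemma concerns only $\im(p)$, which here is $p+2^kY$.
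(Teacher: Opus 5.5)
Your proof is correct and is essentially the paper's argument. You iterate Lemma~\ref{layeredgraph1} with $C=0$ on the graph with base $p$ and translation set $2^kY$ to get $\delta(p+2^kY)=\delta(X)$, pass to $p+nY$ by containment in a translate, and finish with the triangle inequality; the exploratory ``Inductive step'' paragraph is an abandoned detour and can be deleted. Your last step, which applies Remark~\ref{triangleineq-rmk}(2) with $p$ in the first slot, is in fact a cleaner justification than the paper's shorthand $\delta(nY-mY)\leq\delta(nY+mY)$, since getting that inequality from Remark~\ref{triangleineq-rmk}(1) would additionally require $\delta(2nY)=\delta(nY)$.
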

\begin{proof} Let $X,Y$ be as in the statement of the lemma, and consider the definably commutative graph $\Gamma_{X,Y}=(U_0\sqcup U_1\sqcup U_2,E)$, following the construction in Example \ref{groupgraph}. Let $p$ be a generic type on $U_0$. By Lemma \ref{layeredgraph1}, taking $C=0$, $\delta(p+2Y)=\delta(\im^{2}(p))=\delta(U_0)=\delta(X)$. Now replace the role of $X$ by $p$ and $Y$ by $2Y$. By Lemma \ref{layeredgraph1} again, we have $\delta(p+4Y) = \delta(X)$. 
Inducting on $n$, we get  $\delta(p+2^{n}Y)=\delta(X)$ for all $n\in \N$. Note that for any $m\in \N$, the type-definable sets $p+mY$ and $mY$ are contained in translates of $p+2^NY$ for some large enough $N\in\N$. Hence, $\delta(p+mY) = \delta(X)$ and $\delta(mY)\leq \delta(X)$ for any $m\in \N$. Finally, Remark \ref{triangleineq-rmk} gives $\delta(nY-mY)\leq \delta(nY+mY)\leq \delta(X)$, 
and the lemma is proven. 
\end{proof}
\end{subsection}
\begin{subsection}{Balog-Szemer\'edi-Gowers Lemma}
In order to state the lemma, we first recall the definition of $n-$gons.

\begin{definition}
     A sequence of tuples $a_i\in \mathbb{M}^{x_i}$, $1\leq i \leq n$, is an $n$-gon over a set of parameters $A$, if for any $j\in \{1,\dotsc ,n\}$
    \begin{enumerate}
        
        \item $\delta(a_1,\dotsc a_{j-1},a_{j+1},\dotsc , a_n/A)=\sum_{i\in \{1,\dotsc , n\}, i\neq j}\delta(a_i/A),$
        \item $a_j$ is in $\operatorname{acl}(a_1,\dotsc a_{j-1},a_{j+1},\dotsc , a_n,A)$.
    \end{enumerate}
    In the case of $n=3$ we will simply call it a triangle.
\end{definition}

\begin{example}\label{n-gon}
    Fix $n\geq 3$, and  tuples $a_1,\dotsc , a_n$ with $0<\delta(a_n/A)\leq \delta(a_1/A)=\cdots =\delta(a_{n-1}/A)<\infty$. For $j\in \{1,\dotsc , n\}$, let $a_{\neq j}=(a_1,\dotsc a_{j-1},a_{j+1},\dotsc , a_n)$. Suppose that $a_1,\dotsc a_{n-1}$ are independent over $A$, and for any $j\in \{1,\dotsc , n\}$, the tuple $a_j$ is in $\operatorname{acl}(A,a_{\neq j})$. Then $a_1,\dotsc , a_n$ form an $n-$gon. To see this, it suffices to show that $\delta(a_{\neq j}/A)=(n-1)\delta(a_1/A)$ for any choice of $j\in \{1,\dotsc , n\}$. Note that this will also force $\delta(a_n/A)=\delta(a_1/A)$.
    
    Fix $j\in \{1,\dotsc , n\}$. By additivity and $a_j\in \operatorname{acl}(A,a_{\neq j})$, we have
    \[\delta(a_1,\dotsc , a_n/A)=\delta(a_j/A,a_{\neq j})+\delta(a_{\neq j}/A)=\delta(a_{\neq j}/A).\] Since $a_n\in \operatorname{acl}\{a_1,\dotsc,a_{n-1}\},$ and $a_1,\dotsc , a_{n-1}$ is independent over $A$, we have
    \[\delta(a_1,\dotsc ,a_n/A)=\delta(a_1,\dotsc ,a_{n-1}/A)=(n-1)\delta(a_1/A).\]  Combine the equalities and we have $\delta(a_{\neq j}/A) = (n-1)\delta(a_1/A)$. 
    
    In particular, if $\delta(X)=\delta(Y) =\delta(X-Y)$ for type definable over $A$ sets $X $ and $Y$, then for any $a\ind_A b$ with $a\in X$, $b\in Y$ generic over $A$, the elements $a,\,b,\,a-b$ form a triangle over $A$.
\end{example}

\begin{remark}
\label{ob:ngon}
    Assume that $a_1,\dotsc a_n$ is an $n-$gon over $A$, and $B\supseteq A$. Take $a_1',\dotsc , a_{n-1}'\equiv_{A}a_1,\dotsc , a_{n-1}$, with $ a_1'\dotsc , a_{n-1}'\ind_{A}B$. Pick $a_n'$ with $\tp(a_n'/Aa_1',\dotsc ,a_{n-1}')=\tp(a_n/Aa_1\dotsc a_{n-1})$. Observe that $a_n'\in \operatorname{acl}\{a_1',\dotsc a_{n-1}'\}$, and 
    \[\delta(a_n'/B)\leq \delta(a_n'/A)=\delta(a_n/A)=\delta(a_1/A)=\delta(a_1'/B).\]
    Hence, by Example \ref{n-gon}, $a_1',\cdots, a_n'$ form an $n-$gon over $B$,  and 
    $a_1',\dotsc, a_n' \equiv_A a_1,\dotsc a_n$.
\end{remark}

\begin{lemma}(Weak Balog-Szemer\'edi-Gowers Lemma)\label{6gon}
    Assume that $T$ is a dimensional theory expanding the theory of abelian groups. Let $X$ and $Y$ be type-definable over A subsets with $\delta(X)=\delta(Y)$, and assume that for any generic pair $(a,b)\in X\times Y$ over $A$, we have $\delta(a-b)=\delta(X)$. Then, for any generic $a\in X$, $b\in Y$, there is a $6-$gon $c_0,c_1,c_2, d_0,d_1,d_2$ over $A,a-b$, where $c_i\in X$, $d_i\in Y$ are generic, and $(c_0-d_0)-(c_1-d_1)+(c_2-d_2)=a-b$.
\end{lemma}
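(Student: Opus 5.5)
The plan is the usual "path of length three" construction in the bipartite difference graph, with counting replaced by generics and independence. Work over $A$ and write $\delta(X)=\delta(Y)=k$. Fix generic $a\in X$, $b\in Y$ and put $e=a-b$.

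\emph{Step 1: $e$ is generic.} First I will check that we may take $a\ind_A b$. The hypothesis gives $\delta(a-b/A)=k$ for generic pairs. Then $a,b,e$ form a triangle over $A$ by Example~\ref{n-gon}. In particular $e$ has dimension $k$, and $a\in\acl(Ab e)$, $b\in \acl(A a e)$.

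\emph{Step 2: build the six points.} Put $c_0=a$. Choose $d_0\in Y$ with $d_0\equiv_{Aa} b$ and $d_0\ind_{Aa} b$; this is possible by existence. Next choose $c_1\in X$ with $(c_1,d_0)\equiv_A (a,b)$, chosen independently over $A d_0$ from everything built so far. Continue alternately in the same way. The walk is then
\[c_0=a,\ d_0,\ c_1,\ d_1,\ c_2,\ d_2=b,\]
with each consecutive edge a generic pair $(x,y)$ of $X\times Y$, so that $\delta(x-y)=k$.

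The key is to choose the middle variables so that the alternating sum telescopes:
\[(c_0-d_0)-(c_1-d_0)+(c_1-d_1)-\dots\]
More concretely, I will take the auxiliary elements
\[u=c_0-d_0,\quad v=c_1-d_0,\quad w=c_1-d_1,\quad x=c_2-d_1,\quad y=c_2-d_2.\]
Then $e=u-v+w-x+y$.

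\emph{Step 3: match the stated form.} To obtain exactly $(c_0-d_0)-(c_1-d_1)+(c_2-d_2)=e$, I will re-index. Fresh generics $c_1'\in X$ and $d_1'\in Y$ with $c_1'-d_1'=v-w+x$ are needed. Plan: choose $c_1',d_1'$ generically over $(A,e,\dots)$ subject only to that constraint. This requires that $v-w+x$ be realizable as a difference of a generic pair, which uses the hypothesis again.

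\emph{Step 4: verify the $6$-gon over $(A,e)$.} Algebraicity is immediate, since any one of the six elements is determined by $e$ and the other five through the linear relation. For the dimension condition I will use Example~\ref{n-gon} applied over $Ae$. It suffices to show that five of the $c_i,d_i$ are independent generics over $Ae$, each of dimension $k$, and that the sixth also has dimension $k$. Independence will come from the free choices in Step 2: each new element was chosen independently over the previous ones given its defining edge, and additivity, with the triangle relations, computes the total dimension as $5k$ over $Ae$.

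\emph{Main obstacle.} I expect the hardest part to be controlling the dimension over $Ae$ rather than over $A$. Conditioning on $e$ loses $k$ dimensions, and one must check that the remaining five elements still carry the full $5k$. I would try to handle this by first building seven independent generics over $A$ of total dimension $6k$, with $e$ algebraic over them. Additivity then gives $\delta(\text{six}/Ae)=6k-k=5k$, and the $6$-gon follows as in Remark~\ref{ob:ngon}.
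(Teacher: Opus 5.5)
\paragraph{Where the construction fails.}
Your construction cannot produce a $6$-gon over $Ae$, even when $a\ind_A b$. The problem is that you keep $c_0=a$ and $d_2=b$ among the six points. Since $b=a-e$, we get $\delta(a,b/Ae)=\delta(a/Ae)\le k<2k$ for $k>0$. But in a $6$-gon, every five of the points must have dimension $5k$ over $Ae$, including any five that contain both $a$ and $b$.

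Concretely, $v-w+x=c_2-d_0$. So the requirement $(a-d_0)-(c_1'-d_1')+(c_2-b)=a-b$ forces $c_1'-d_1'=c_2-d_0$. This is a second linear relation, among $d_0,c_1',d_1',c_2$, on top of $b=a-e$. Hence the six points have dimension at most $4k$ over $Ae$. Re-sampling only the middle edge cannot repair this, and the plan in your last paragraph, which needs the six points to have dimension $6k$ over $A$, fails for the same reason.

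Separately, the reduction in Step~1 to $a\ind_A b$ is not available. The conclusion depends only on $e$. When $\delta(e/A)<k$ (e.g.\ $X=Y$, $a=b$, $e=0$), the hypothesis of the lemma shows that $e$ is not the difference of any independent generic pair. This dependent case is exactly how the lemma is used later: for arbitrary $a_1\models p$, $a_2\models q$ in Lemma~\ref{(p-q)X3X-2}, and for arbitrary $a\models p$, $b\models q$ in Theorem~\ref{KT}. It also invalidates your count ``conditioning on $e$ loses $k$ dimensions.''

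\paragraph{The missing idea.}
You need to decouple \emph{every} edge of the path, and to count dimensions over $a,b$ rather than over $e$.

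The paper uses the path $a,\tilde d,\tilde c,b$, where $\tilde d\in Y$ is generic and independent from $a,b$, and $\tilde c\in X$ is generic and independent from $a,b,\tilde d$. By hypothesis each of $a-\tilde d$, $\tilde c-\tilde d$, $\tilde c-b$ has dimension $k$. Over $a,b$ these three differences form a triangle, since $a-\tilde d$ and $\tilde c-b$ recover $\tilde d$ and $\tilde c$.

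For each difference $t$, the paper then picks a fresh copy of one endpoint with the same type over $t$, independent over $t$ from everything chosen so far. For example, it takes $c_0\equiv_{a-\tilde d}a$, so that $c_0-(a-\tilde d)\in Y$. The other endpoint is then defined by subtracting $t$.

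The three fresh points together with two of the differences have dimension $3k+2k=5k$ over $a,b$. By monotonicity they have dimension at least $5k$ over $e$. They are interalgebraic over $e$ with the six new points, so Example~\ref{n-gon} gives the $6$-gon. Neither $a$ nor $b$ appears among the six points, and nothing is assumed about $\delta(e/A)$ or about independence of $a$ and $b$.
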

This is a weak analogue of a result by Balog and Szemer\'edi \cite{Balog1994}, which was later proved with a different method by Gowers \cite{Gower1998} with improved bounds. Our statement is weak in the sense that we assume all generic pairs $(a,b)\in X\times Y$ satisfy $\delta(a-b)=\delta(X)$. Although the precise bounds in the finitary result does not show up in our infinitary setting, our proof follows the ideas of Gowers, as presented in \cite{TaoVu}.
    \begin{proof}
      Without loss of generality, assume $\delta(X)=1$, and $A = \emptyset$. Pick $\tilde d\in Y$ such that $\tilde d\ind (a,b)$, and $\tilde c\in X$ such that $\tilde c\ind (a,b,\tilde d)$. The assumption about generic pairs in $X\times Y$ says that \[\delta(a-\tilde d)=\delta(\tilde d-\tilde c)=\delta(b-\tilde c) = 1.\] Example~\ref{n-gon} shows that $ (a,\tilde d,a-\tilde d)$, $(\tilde d,\tilde c,\tilde d-\tilde c)$ and $(b,\tilde c,b-\tilde c)$ are triangles over $\emptyset$.

      \medskip
      \noindent\textbf{Claim.} $(a-\tilde d),(\tilde c - \tilde d), (\tilde c-b)$ forms a triangle over $a,b$. 
      \begin{proof}[Proof of the claim.]
         Notice that each of $(a-\tilde d),(\tilde c - \tilde d), (\tilde c-b)$ has dimension at most $1$, by assumptions of the lemma. By the choices of $\tilde c$ and $\tilde d$, we have $\delta((a-\tilde d),(\tilde c-b)/a,b)=\delta(\tilde c, \tilde d/a,b)=2$, so $\delta((a-\tilde d))=\delta(\tilde c-b)=1$. Since$(a-\tilde d)-(\tilde c - \tilde d )+(\tilde c -b)=a-b$, Claim follows from Example~\ref{n-gon}.
      \end{proof}
        
        The next step is to replace pairs $(a,\tilde d)$, $(\tilde c, \tilde d)$ and $(\tilde c,b)$ with pairs that are independent over $a,b$. We do this as follows.
      Pick $c_0\equiv_{a-\tilde d}a$ with $c_0\ind_{(a-\tilde d)} a,b,(\tilde d-\tilde c),(\tilde c-b)$. Since $a\ind (a-\tilde d)$ and $c_0\equiv_{(a-\tilde d)}a$, we have $c_0\ind (a-\tilde d)$, and transitivity gives 
      \[c_0\ind a,b,(a-\tilde d),(\tilde d - \tilde c ),(\tilde c - b).\]
      Pick $d_0\equiv_{(\tilde d-\tilde c)}\tilde d$ with $d_0\ind_{({\tilde d-\tilde c})} a,b,(a-\tilde d),(\tilde d-\tilde c),(\tilde c-b),c_0$. Transitivity shows that $d_0\ind a,b, (a-\tilde d),(\tilde d-\tilde c),(\tilde c-b),c_0$. Finally, pick $c_1\equiv_{(\tilde c-b)} \tilde c$ with $c_1\ind_{(\tilde c-b)} a,b,(a-\tilde d),(\tilde d-\tilde c),(\tilde c-b),c_0,d_0$. Again, by transitivity, $c_1\ind a,b,(a-\tilde d),(\tilde d-\tilde c),(\tilde c-b),c_0,d_0$. Using the fact that $(\tilde d - \tilde c) \in \operatorname{acl}((a-\tilde d), (\tilde c -b))$ and the choices of $c_0,d_0,c_1$, we have

      \begin{equation}
          \begin{split}
              &\delta(c_0,d_0,c_1,(a-\tilde d),(\tilde d-\tilde c),(\tilde c-b)/(a-b))\\&=\delta(c_0,d_0,c_1,(a-\tilde d)(\tilde c-b)/(a-b))\\& \geq \delta(c_0,d_0,c_1,(a-\tilde d),(\tilde c -b)/a,b)\\
              &=5. 
          \end{split}
      \end{equation}
      Since $c_0\equiv_{(a-\tilde d)} a$, we have $c_2:=c_0-(a-\tilde d)\in Y$. Similarly $d_1:=d_0-(\tilde d-\tilde c) \in X$, and $ d_2:=c_1-(\tilde c-b)\in Y$. Then $(c_0,(a-\tilde d), c_2), (d_0,(\tilde d- \tilde c),d_1)$ and $(c_1,(\tilde c - b),d_2)$ are all triangles, by Example \ref{n-gon}.
      
      Hence, by the way we chose them, for $i\in \{1,2,3\}$, $c_i$ and $d_i$ both have dimension $1$ over $a,b$.

      Now, $a-b = c_0- c_2+d_0-d_1+c_1-d_2$, and each $c_i$ or $d_i$ is in the algebraic closure  of the others $c_j,d_j$ over $a,b$. The tuples 
      \[c_0,d_0,c_1,(a-\tilde d),(\tilde d-\tilde c) \text{ and } c_0,c_1,c_2,d_0,d_1,d_2\] are inter-definable over $a,b$, by the choices of $c_0,c_2,d_1,d_2$, so we also have $\delta(c_0,c_1,c_2,d_0,d_1/a,b)=5$. Hence $c_0,c_1,c_2\in X$ and $d_0,d_1,d_2\in Y$ form a $6-$gon over $(a-b)$ by Example \ref{n-gon}.
    \end{proof}
    \end{subsection}
\end{section}

\begin{section}{Katz Tao Lemma}

    In this section, we assume that the theory $T$ expand the theory of fields, and $T$ is dimensional with dimension $\delta$. 

    The goal of this section is to bound the dimension of $pp-pp$ for a generic type $p$ on a type-definable set $X$ with non-zero finite dimension, given small additive and multiplicative doubling of $X$. This is an analogue of Katz and Tao's method \cite[4.2]{KatzTao}.
  
\begin{lemma}\label{(p-q)X3X-2}
    Suppose that $X$ is a type definable over $A$ subset satisfying $0<\delta(X)=\delta(X+X)=\delta(XX
    ) <\infty$. Let $p,q$ be generic types concentrated on $X$ over $A$. Then $\delta((p-q)X^3X^{-2}) = \delta(X)$.

    \begin{proof}
    Without loss of generality, we may assume $\delta(X) = 1$ and $A = \emptyset$. Let $a\in (p-q)X^3X^{-2}$. Choose $a_1\models p$, $a_2\models q$ and $b_1,...,b_5 \in X$, such that $a = (a_1-a_2)\frac{b_1,b_2,b_3}{b_4,b_5}$. Assume that $b_1,\dotsc, b_5\neq 0$, and write $\tilde b = \frac{b_1,b_2,b_3}{b_4,b_5}$. 
    
    By the previous lemma, taking $X=p$ and $Y=q$, there are $c_1,...,c_6$ forming a $6-$gon over $(a_1-a_2)$, such that $a_1-a_2 = (c_1-c_2)-(c_3-c_4) +(c_5-c_6)$, where $c_1,c_3,c_5\models p$ and $c_2,c_4,c_6 \models q$. By Remark \ref{ob:ngon}, we can assume $c_1,\dotsc c_6$ is a $6-$gon over $(a_1-a_2), \tilde b$. 
    The sumset estimate in the multiplicative group gives $\delta(X^4X^{-2})=1$, so $\delta(c_i\tilde b)\leq 1$ for $i=1,\dotsc 6$. Thus
    \[6\geq \delta\left(a,c_1\tilde b,\dotsc, c_6\tilde b\right)\]\[\geq \delta \left( c_1\tilde b,\dotsc, c_6\tilde b/a \right)+\delta(a)\geq 5+\delta(a).\] Hence $\delta(a)\leq 1$. 
    
    So we have that $\delta(a)\leq 1$ for arbitrary $0\neq a\in (p-q)X^3X^{-1}$, hence $\delta((p-q)X^3X^{-2})=1.$
    \end{proof}
\end{lemma}

\begin{theorem}[Dimentional Katz-Tao's Lemma]\label{KT}
    Suppose $X$ is a type-definable over $A$ set with $0<\delta(X)=\delta(XX) = \delta(X+X)<\infty$, and $p,q$ are any generic types on $X$ over $A$. Then $\delta(pq-pq)=\delta(X).$
    \begin{proof}
        Without loss of generality, assume $\delta(X) = 1$ and $A = \emptyset$. Fix $a,a' \models p$ and  $b,b'\models q$. By Lemma \ref{6gon}, applied to the multiplicative group with $Y=X^{-1}$ and Remark \ref{ob:ngon}, we have $ab= \frac{a_1b_1a_3b_3}{a_2b_2}$, where $a_1,a_2,a_3,b_1,b_2,b_3\in X$ form a $6-$gon over $a,b,a'b'$. Then \begin{equation}\label{4-gon}
            \begin{split}
                ab-a'b'&= \frac{(a_1-b')b_1a_3b_3}{a_2b_2}-\frac{b'(a'-b_1)a_3b_3}{a_2b_2}\\
                & +\frac{a'b'(a_3-b_2)b_3}{a_2b_2}-\frac{a'b'b_2(a_2-b_3)}{a_2b_2}.
            \end{split}
        \end{equation}
        \noindent\textbf{Claim:} The elements $\frac{(a_1-b')b_1a_3b_3}{a_2b_2},\frac{b'(a'-b_1)a_3b_3}{a_2b_2},\frac{a'b'(a_3-b_2)b_3}{a_2b_2},\frac{a'b'b_2(a_2-b_3)}{a_2b_2}$ form a $4-$gon over $\{a_2,b_2,a,b,a'b'\}$.
        \begin{proof}
            By Lemma \ref{(p-q)X3X-2}, each term has dimension at most $1$. Each term is in the algebraic closure of $a_2,b_2,a,b,a',b'$ and the other terms by  \eqref{4-gon}. By Example \ref{n-gon}, it suffices to show that \[\delta \left(\frac{b'(a'-b_1)a_3b_3}{a_2b_2},\frac{a'b'(a_3-b_2)b_3}{a_2b_2},\frac{a'b'b_2(a_2-b_3)}{a_2b_2}\Big/ a_2,b_2,a,b,a',b'\right)\geq 3.\] Observe that we can solve for $b_3$, $a_3$ and $b_1$, one by one, given $a_2,b_2,a,b,a',b'$, so $\frac{b'(a'-b_1)a_3b_3}{a_2b_2},\frac{a'b'(a_3-b_2)b_3}{a_2b_2},\frac{a'b'b_2(a_2-b_3)}{a_2b_2}$ are interdefinable with $b_3,a_3,b_1$ over $a_2,b_2,a,b,a',b'$. Also $\delta(b_3,a_3,b_1/a_2,b_2,a,b,a',b')=3$, since $a_1,a_2,a_3,b_1,b_2,b_3$ forms a $6-$gon over $a,b,a',b'$, 
        \end{proof}
        
        Now we compute
        \begin{equation*}
        \begin{split}
            4&\geq\delta \left(\frac{(a_1-b')b_1a_3b_3}{a_2b_2},\frac{b'(a'-b_1)a_3b_3}{a_2b_2},\frac{a'b'(a_3-b_2)b_3}{a_2b_2},\frac{a'b'b_2(a_2-b_3)}{a_2b_2}\right)
            \\&= \delta \left(ab-a'b',\frac{(a_1-b')b_1a_3b_3}{a_2b_2},\frac{b'(a'-b_1)a_3b_3}{a_2b_2},\frac{a'b'(a_3-b_2)b_3}{a_2b_2},\frac{a'b'b_2(a_2-b_3)}{a_2b_2}\right)\\
            &\geq \delta \left(\frac{(a_1-b')b_1a_3b_3}{a_2b_2},\frac{b'(a'-b_1)a_3b_3}{a_2b_2},\frac{a'b'(a_3-b_2)b_3}{a_2b_2},\frac{a'b'b_2(a_2-b_3)}{a_2b_2}/ab-a'b'\right)\\&
            \,+\delta(ab-a'b')\\
            &\geq 3+\delta(ab-a'b'). 
            \end{split}
        \end{equation*}
        Hence $\delta(ab-a'b')\leq 1$.
    \end{proof}
    
\end{theorem}
\end{section}
\begin{section}{Main Theorem}
We first prove the following theorem, whose assumptions are a priori stronger than in the  main theorem. The main theorem then follows easily from Lemma \ref{KT}.
\begin{theorem}\label{nonsym-main}
   Let $F$ be a sufficiently saturated dimensional expansion of a field. Let $Y$ be a type-definable subset satisfying $0<\delta(YY-Y)=\delta(Y)<\infty$. Then $\frac{Y-Y}{Y-Y}$ is a definable field and $\delta(\frac{Y-Y}{Y-Y})=\delta(Y)$.
\end{theorem}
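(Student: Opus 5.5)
Set $d=\delta(Y)$ and work over the parameters of $Y$. The plan is to produce a type-definable set $L\supseteq Y-Y$ with $\delta(L)=d$ that is closed under addition, subtraction and multiplication. Then we show that $\frac{Y-Y}{Y-Y}$ already equals the fraction field of $L$, and that this field has dimension $d$. Definability comes at the end: a type-definable field of finite dimension equal to that of a type-definable generating subset is bounded by a finite iterate of a definable set, and compactness turns it into a definable one.

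First, the additive step. Since $0\in Y-Y$ up to translation, from $\delta(YY-Y)=\delta(Y)$ and $Y\subseteq a^{-1}(YY)$ for $a\in Y$ we get control of translates. Fix $y_0\in Y$ (nonzero, which is possible since $\delta(Y)>0$). The inclusion $y_0Y\subseteq YY$ gives $\delta(y_0Y - Y)\le d$. Applying the Sumset Estimate (Lemma~\ref{sumset}) with $X=y_0Y$ and the set $-Y$, and symmetrically, gives $\delta(mY-nY)\le d$ for all $m,n$. We also need products. Set $X=YY$-shifted sets: from $\delta(YY-Y)=d$ and Remark~\ref{triangleineq-rmk}(2) applied with $X=-Y$, the sets $Y$ and $YY$ satisfy $\delta(YY-Y)\le d$. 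The Ruzsa triangle inequality (Lemma~\ref{triangle}) then gives $\delta(YY-YY)\le \delta(YY-Y)+\delta(Y-YY)-\delta(Y)\le d$. Iterating triangle and sumset arguments, I aim to show that $\delta(P(Y))\le d$ for every polynomial expression $P$ in finitely many copies of $Y$ with integer coefficients. The route is to bound $\delta((Y-Y)(Y-Y)+(Y-Y))$ and then induct: if $\delta(Z)\le d$ and $\delta(ZY-Y)\le d$, then $\delta(ZY\pm Z\cdots)\le d$.

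Second, the field step. Let $R=\bigcup_n R_n$ be the ring generated by $Y-Y$, where each $R_n$ is a type-definable polynomial image with $\delta(R_n)=d$. For $r\in R$ nonzero, the map $x\mapsto rx$ injects $R_n$ into $R_{n'}$, so for generic $x\in R_N$ with $N$ large, the stabilization of dimensions forces $rR_N$ and $R_N$ to have commensurable generic parts. Using Cauchy--Schwarz (part (6) of the Proposition) on the map $(u,v)\mapsto u/v$ on $(Y-Y)\times(Y-Y)$, I would show the quotient set $Q=\frac{Y-Y}{Y-Y}$ satisfies $\delta(Q)\le d$. For the lower bound, $Q\supseteq (Y-Y)/c$. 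The key claim is that $Q$ is closed under $+$ and $\cdot$. Given $u_1/v_1, u_2/v_2\in Q$, choose the representation generically. The sum equals $(u_1v_2+u_2v_1)/(v_1v_2)$, and we must rewrite this with a single difference in the numerator and denominator. The idea is the classical trick: since $\delta(Q)=d=\delta(Y-Y)$, for generic $q\in Q$ the fibre of the quotient map is $d$-dimensional. So for $q\in Q$ the set $(Y-Y)\cap q(Y-Y)$ has dimension $d$ and contains a generic element of $Y-Y$. Then $q+q'$ can be expressed as $(a+b)/c$ with $a,b,c$ generic in the appropriate sets, and a dimension count shows that $a+b\in (Y-Y)\cdot t$ for a suitable $t$.

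The main obstacle is this last closure step. The dimension statements only control generic elements, so we get that $Q\cdot Q$ and $Q+Q$ are contained in $Q$ generically. To upgrade to all elements, use the standard stabilizer argument: $S=\{q: \delta(qQ\cap Q)=d\}$ contains $Q$, and the "generic" field generated, $Q\cdot Q^{-1}\cdots$, has dimension $d$ by the estimates. Every element is a product or sum of two generics, and this closes up $Q$ exactly because $0\in Y-Y$ and $Q=Q^{-1}\cup\{0\}$. Definability follows because a type-definable subfield of dimension $d$ whose complement has to be bounded by compactness equals some definable $\frac{Y'-Y'}{Y'-Y'}$, using continuity (axiom II).
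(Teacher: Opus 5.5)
There is a genuine gap at the heart of your argument: you never establish that $Q=\frac{Y-Y}{Y-Y}$ is closed under $+$ and $\cdot$. Your fibre count only controls $q=u/v$ for generic $(u,v)$, and the proposed upgrade does not work.
\begin{itemize}
\item If $q\in S=\{q:\delta(qQ\cap Q)=d\}$, then $q\cdot\frac{u}{v}=\frac{u'}{v'}$ only puts $q$ in $QQ$, not in $Q$. So you would already need $QQ\subseteq Q$, which is what you are trying to prove.
\item There is no reason for $S$ to be closed under addition.
\item ``Every element is a product or sum of two generics'' presupposes that $Q$ is a group.
\end{itemize}

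The missing idea is to take the stabilizer with respect to $Y$ and the additive structure, $K=\{x:\delta(xY+Y)=\delta(Y)\}$. Your bounds $\delta(mY-nY)\le d$ are correct, and they are exactly what the Ruzsa triangle inequality needs to show that $K$ is closed under sums, products and inverses. The same inequality shows $Y\subseteq K$, using $\delta(xY-Y)\le\delta(YY-Y)$ for $x\in Y$.

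The decisive step is $K\subseteq Q$. For $x\in K$, the map $Y\times Y\to xY+Y$, $(y,y')\mapsto xy+y'$, cannot be injective, since $\delta(Y\times Y)=2\delta(Y)>\delta(Y)$; this is where $\delta(Y)>0$ is used. So $xa+b=xa'+b'$ with $a\neq a'$, i.e.\ $x=\frac{b'-b}{a-a'}\in Q$. Hence $Q=K$ is a type-definable field. You never need the ring generated by $Y-Y$. That ring is anyway only a countable union $\bigcup_n R_n$, not the type-definable $L$ you posit, and your bound $\delta(P(Y))\le d$ for all polynomial expressions is only stated as an aim.

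Two further steps fail as written.
\begin{itemize}
\item Cauchy--Schwarz, $2\delta(X)\le\delta(\mathrm{image})+\delta(E)$, bounds the dimension of the image from below. It therefore cannot give $\delta(Q)\le d$. The upper bound comes from three ingredients: the inclusion $(Y-Y)(Y-Y)\subseteq 2YY-2YY$, the sumset estimate applied to $-Y$ and $YY$, and the triangle inequality in the multiplicative group (Remark~\ref{triangleineq-rmk}).
\item Your definability paragraph contains no actual mechanism, and continuity plays no role. The argument is again non-injectivity. Choose a definable $W\supseteq Q$ with $\delta(W)<2\delta(Q)$. By compactness, choose a definable $Z$ with $Q\subseteq Z$ and $Z+ZZ\subseteq W$. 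For $a\in Z$, the map $Q\times Q\to W$, $(x,y)\mapsto x+ay$, is not injective. So $a\in\frac{Q-Q}{Q-Q}=Q$, whence $Z=Q$.
\end{itemize}
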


The proof proceeds in two steps. We first identify a property $(\star)$ of elements of the field that form a subfield, an analogue of the basic construction in \cite{tao09}. Then we argue that it is equal to $\frac{Y-Y}{Y-Y}$. The proof only uses the triangle inequality and the sumset estimate.
\begin{definition}
     Let $Y\subseteq F$ be as in the theorem. For an element $x$ of $F$,  we say that $x$ satisfies the property $(\star)$ if
    \[\delta(xY+Y)=\delta(Y).\]
\end{definition}
    
\begin{lemma}\label{star}
    For $Y$ as in Theorem~\ref{nonsym-main},
    the elements with the property $(\star)$ form a subfield.
\end{lemma}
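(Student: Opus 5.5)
The plan is to show that the set $S=\{x\in F:\delta(xY+Y)=\delta(Y)\}$ is closed under the field operations. Throughout I will use two facts. First, for any $x$ the set $xY+Y$ contains a translate of $Y$, so $\delta(xY+Y)\geq \delta(Y)$ always; hence $(\star)$ is equivalent to $\delta(xY+Y)\leq\delta(Y)$. Second, for $x\neq 0$ multiplication by $x$ is a definable bijection, so $\delta(xZ)=\delta(Z)$ for any type-definable $Z$. I will also use the main tool, the Sumset Estimate (Lemma~\ref{sumset}), applied to unions.

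\emph{Union trick.} Suppose $x_1,\dots,x_k\in S$, and put $W=Y\cup x_1Y\cup\dots\cup x_kY$. By axiom (I)(1), applied to type-definable sets via the infimum over formulas, we have $\delta(Y+W)=\max_i\delta(Y+x_iY)=\delta(Y)$. Note that $1\in S$ is not needed here, because $Y+Y$ is dealt with separately. The hypothesis $\delta(YY-Y)=\delta(Y)$ gives $\delta(Y-Y)\le\delta(Y)$: fix a nonzero $y_0\in Y$, which exists since $\delta(Y)>0$; then $y_0Y-Y\subseteq YY-Y$ and $\delta(y_0Y-Y)\ge\delta(Y-Y)$ up to rescaling. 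Applying the Sumset Estimate with $X=Y$ and $-Y$ then yields $\delta(mY-nY)\le\delta(Y)$ for all $m,n$, so in particular $\delta(Y+Y)=\delta(Y)$ and $Y$ itself may be included in $W$. Applying the Sumset Estimate once more, with $X=Y$ and the set $W$, gives $\delta(mW-nW)\leq\delta(Y)$ for all $m,n$.

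With this in hand, the closure properties follow in turn.
\begin{itemize}
\item The elements $0$ and $1$ lie in $S$, since $\delta(0Y+Y)=\delta(Y)$ and $\delta(Y+Y)=\delta(Y)$.
\item For negation, let $x\in S$ and take $W=Y\cup xY$. Then $-xY+Y\subseteq W-W$, which has dimension at most $\delta(Y)$.
\item For sums, let $x,x'\in S$ and take $W=Y\cup xY\cup x'Y$. Then $(x+x')Y+Y\subseteq xY+x'Y+Y\subseteq 3W$, so $\delta((x+x')Y+Y)\leq\delta(Y)$.
\item For inverses, let $0\neq x\in S$. Then $x^{-1}Y+Y=x^{-1}(Y+xY)$, which has the same dimension as $Y+xY$.
\end{itemize}

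The step I expect to be the main obstacle is closure under products, since products cannot be handled by the union trick alone. Here I would use the Dimensional Ruzsa Triangle Inequality (Lemma~\ref{triangle}). Take $x,x'\in S$ with $x\neq0$, since the case $x=0$ is trivial. Apply the inequality with the roles of $X,Y,Z$ played by $xY$, $xx'Y$ and $-Y$ respectively. This gives
\[\delta(xY)+\delta(xx'Y+Y)\leq \delta(xY-xx'Y)+\delta(xY+Y).\]
The last term is at most $\delta(Y)$ because $x\in S$. For the middle term, $xY-xx'Y=x(Y-x'Y)$ has the same dimension as $Y-x'Y$. By the union trick with $W=Y\cup x'Y$, the set $Y-x'Y\subseteq W-W$ has dimension at most $\delta(Y)$. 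Since $\delta(xY)=\delta(Y)$ and $\delta(Y)$ is finite, we can cancel to get $\delta(xx'Y+Y)\leq\delta(Y)$, so $xx'\in S$.

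The main care needed is bookkeeping. One must check that the unions and the relevant images are type-definable over a small set, so that Lemma~\ref{sumset} and Lemma~\ref{triangle} apply. One must also check that all the dimensions involved are finite, so that the cancellations are legitimate.
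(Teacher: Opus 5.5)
\textbf{The gap.} Your justification of $\delta(Y-Y)\le\delta(Y)$ does not hold up, and the rest of your argument depends on it. It is what gives $\delta(Y+Y)=\delta(Y)$, hence $1\in S$ and the right to put $Y$ inside $W$.

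The inclusion $y_0Y-Y\subseteq YY-Y$ only gives $\delta(y_0Y-Y)\le\delta(Y)$. No rescaling relates $y_0Y-Y$ to $Y-Y$: multiplying by $y_0^{-1}$ gives $Y-y_0^{-1}Y$, not $Y-Y$. In fact $\delta(aY-Y)\ge\delta(Y-Y)$ is false for general $Y$. For $a=-1$ it says $\delta(Y+Y)\ge\delta(Y-Y)$. This fails for the coarse dimension of $Y=\{0,1,3\}^k$ as $k\to\infty$, embedded in $\mathbb{Q}$ via base-$10$ digits: there $|Y|=3^k$, $|Y+Y|=6^k$ and $|Y-Y|=7^k$.

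The conclusion you want is true in your setting, but only because of the hypothesis, and it needs an argument. Apply Lemma~\ref{triangle} to $y_0Y,\,Y,\,Y$ to get $\delta(y_0Y)+\delta(Y-Y)\le 2\delta(y_0Y-Y)\le 2\delta(Y)$, then cancel $\delta(y_0Y)=\delta(Y)$. The paper argues differently: it notes $\delta(Y)\le\delta(YY)\le\delta(YY-Y)=\delta(Y)$, so Lemma~\ref{sumset} applies to the pair $YY$, $-Y$. That gives $\delta(nY-mY)\le\delta(Y)$ for all $n,m$ at once.

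\textbf{Comparison with the paper.} Once this is repaired your proof is complete. It differs from the paper's only in how additive closure is obtained.

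The paper derives closure under negation and sums from the triangle inequality. It goes via Claim A ($\delta(xY+2Y)=\delta(xY-Y)=\delta(xY+Y-Y)=\delta(Y)$) and Claim B, and uses the sumset estimate only for sums and differences of $Y$ itself.

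You instead apply Lemma~\ref{sumset} to the union $W=Y\cup xY\cup x'Y$, which is type-definable over $A,x,x'$. This bounds every $mW-nW$ at once, so $Y-xY\subseteq W-W$ and $(x+x')Y+Y\subseteq 3W$ are immediate. You avoid the case-by-case triangle-inequality bookkeeping. The cost is leaning on the Pl\"unnecke-type estimate for an auxiliary set depending on $x,x'$, which is harmless.

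Your product step, with the triple $xY$, $xx'Y$, $-Y$, is exactly the paper's Claim C, and your inverse step is its Claim D.
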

\begin{proof}
        Since $\delta(YY-Y)=\delta(Y)$, any element of $Y$ has $(\star)$, so the set of elements of $F$ with $(\star)$ is not empty. We need to show that it is closed under addition, multiplication and inverses. 
        
        \medskip
\noindent\textbf{Claim A.} Assume $x\in F$ has property $(\star)$, then \[\delta(xY+2Y)=\delta(xY-Y)=\delta(xY+Y-Y)=\delta(Y).\]
\begin{proof}
         As $\delta(YY)=\delta(Y)$, we have $\delta(YY-Y)=\delta(YY)=\delta(Y)$. By the sumset estimate, we have $\delta(nY-mY)=\delta(YY)=\delta(Y)$ for any $n,m\in \N$. In particular, $\delta(3Y)\leq \delta(Y)$. 
         
         Assume $x$ has $(\star)$. By the triangle inequality, \[\delta(Y)+\delta(xY+2Y)=\delta(xY-(-Y))+\delta(Y-(-2Y))=\delta(xY+Y)+\delta(3Y).\] As $\delta(Y)=\delta(xY+Y)$, we have $\delta(xY+2Y)=\delta(3Y)=\delta(Y)$. Notice that Remark \ref{triangleineq-rmk} implies  $\delta(xY-Y)=\delta(Y)$ and $\delta((xY+Y)-Y)=\delta(Y)$. 
\end{proof}
       
    \medskip
    
    \noindent\textbf{Claim B.} The set of elements satisfying $(\star)$ is closed under addition.
    \begin{proof}
     Assume that $x,y$ satisfies $(\star)$, so $\delta(xY+2Y)=\delta(yY+2Y)$ by Claim A. Apply the triangle inequality to get \[\delta(Y)+\delta(xY+Y+yY+Y)\leq \delta(Y-(xY+Y))+\delta(Y-(-(yY+Y))).\] Canceling  $\delta(Y)=\delta(yY+2Y)$ on both side and rearranging, we get
     \[\delta(xY+yY+2Y)\leq \delta(Y-(xY+Y))=\delta(xY+Y-Y)=\delta(Y).\]
     Finally, $(x+y)Y+2Y\subseteq xY+yY+2Y$ implies that $\delta((x+y)Y+2Y)\leq \delta(xY+yY+2Y)\leq \delta(Y)$. Since $(x+y)Y+2Y$ contains additive translates of $(x+y)Y+Y$, we have $\delta((x+y)Y+Y)\leq \delta(Y)$, and $x+y$ has $(\star)$. 
     \end{proof}
     \medskip
     
     \noindent\textbf{Claim C,} The set of elements satisfying $(\star)$ is closed under multiplication.
     \begin{proof}
     Assume that $x$ and $y$ satisfy $(\star)$. Multiplying $yY-Y$ by $x$, we see that $\delta(xyY-xY)=\delta(yY-Y)= \delta(Y)$.  Since, by Claim A,  $x$ satisfies $\delta(xY-Y)=\delta(Y)$, the triangle inequality gives
     \[\delta(xY)+\delta(xyY+Y)\leq \delta(xY-xyY)+\delta(xY-(-Y))=2\delta(Y).\] Cancel $\delta(Y)$ from both sides and we obtain the claim.
    \end{proof}
     \medskip
\noindent\textbf{Claim D.} $(\star)$ is closed under additive and multiplicative inverses.  
\begin{proof}
     For $x$ satisfying $(\star)$, we have $\delta(x^{-1}Y+Y)=\delta(x^{-1}(xY+Y))=\delta(Y)$, and $\delta(-xY+Y)=\delta(xY+Y)=\delta(Y)$, by Remark \ref{triangleineq-rmk}. 
     \end{proof}
\end{proof}

\begin{lemma}
    Let $Y$ be as in Theorem~\ref{nonsym-main}. For any $x\in F$, x satisfies $(\star)$ if and only if $x\in \frac{Y-Y}{Y-Y}$. 
\end{lemma}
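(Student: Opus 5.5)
The plan is to prove the two inclusions separately. Write $K$ for the set of elements of $F$ satisfying $(\star)$, and work over the parameter set $A$ over which $Y$ is type-definable. The direction $\frac{Y-Y}{Y-Y}\subseteq K$ should follow directly from Lemma~\ref{star}. The direction $K\subseteq\frac{Y-Y}{Y-Y}$ will be a fiber-counting argument using additivity of $\delta$.

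\emph{Step 1 (the easy inclusion).} As observed at the start of the proof of Lemma~\ref{star}, every element of $Y$ has $(\star)$, since $\delta(yY+Y)\le\delta(YY+Y)\le\delta(Y)$ by the sumset estimate. Hence $Y\subseteq K$. By Lemma~\ref{star}, $K$ is a subfield, so it is closed under differences, products and inverses of nonzero elements. Therefore $\frac{a-b}{c-d}\in K$ whenever $a,b,c,d\in Y$ and $c\neq d$.

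\emph{Step 2 (the reverse inclusion).} Fix $x\in K$.
\begin{itemize}
\item If $x=0$, then $x=\frac{a-a}{c-d}$ for any $a\in Y$ and distinct $c,d\in Y$. Such $c,d$ exist because $\delta(Y)>0$ forces $Y$ to be infinite.
\item If $x\neq 0$, take $(y_1,y_2)$ generic in $Y\times Y$ over $Ax$, so that $\delta(y_1,y_2/Ax)=2\delta(Y)$. Put $s=xy_1+y_2$. Then $s\in xY+Y$, so $\delta(s/Ax)\le\delta(xY+Y)=\delta(Y)$.
\end{itemize}
In the second case, additivity gives
\[\delta(y_1,y_2/Axs)=\delta(y_1,y_2,s/Ax)-\delta(s/Ax)\ge 2\delta(Y)-\delta(Y)=\delta(Y)>0.\]
Hence $(y_1,y_2)\notin\acl(Axs)$, and by saturation there is a second realization $(y_1',y_2')\neq(y_1,y_2)$ of $\tp(y_1,y_2/Axs)$. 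Since $s$ is among the parameters, $xy_1'+y_2'=s=xy_1+y_2$. If $y_1'=y_1$ then also $y_2'=y_2$, contradicting distinctness, so $y_1\neq y_1'$. Thus
\[x=\frac{y_2'-y_2}{y_1-y_1'}\in\frac{Y-Y}{Y-Y},\]
using that $y_1,y_1',y_2,y_2'\in Y$ because they realize types extending the type of $Y$.

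\emph{Main obstacle.} The only delicate point is Step 2: making the fiber argument rigorous for type-definable sets rather than definable ones. The plan avoids applying axiom (I)(5) to type-definable maps by working with generic tuples and additivity, as in the proof of Lemma~\ref{triangle}. For this I need three facts:
\begin{itemize}
\item the bound $\delta(s/Ax)\le\delta(xY+Y)$ holds because $xY+Y$ is type-definable over $Ax$ and contains $s$;
\item a generic pair in $Y\times Y$ over $Ax$ exists, with dimension $2\delta(Y)$; this follows from existence and additivity, noting that $\delta(Y)$ is the same over $A$ and over $Ax$ by the generic-extension property;
\item a tuple of positive dimension over a set is not algebraic over it, which is axiom (I)(3) together with monotonicity.
\end{itemize}
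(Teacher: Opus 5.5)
Your proof is correct and follows the paper's route. You show every element of $Y$ has $(\star)$, invoke Lemma~\ref{star} for closure, and for the converse exhibit a collision of $(y,y')\mapsto xy+y'$ on $Y\times Y$. Your generic-point and additivity argument makes rigorous the paper's one-line claim that $\delta(xY+Y)<\delta(Y\times Y)$ forces non-injectivity, and the separate case $x=0$ is unnecessary since the same argument covers it.

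One small correction: $\delta(YY+Y)\le\delta(Y)$ does not follow from the sumset estimate alone, which only bounds sets of the form $mY-nY$. It follows from the triangle inequality (Lemma~\ref{triangle}), $\delta(Y)+\delta(YY+Y)\le\delta(Y-YY)+\delta(2Y)\le 2\delta(Y)$, which is how the paper shows that elements of $Y$ have $(\star)$.
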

    \begin{proof}
        By the assumption $\delta(YY-Y)=\delta(Y)$, any $x\in Y$ satisfies $\delta(xY-Y)\leq \delta(YY-Y)=\delta(Y)$. Using the subset estimate and the triangle inequality
        \[\delta(Y)+\delta(xY+Y)\leq \delta(Y-xY)+\delta(Y-(-Y))=2\delta(Y),\] so $\delta(xY+Y) = \delta(Y)$, and all elements of $Y$ has property $(\star)$. We have shown in the previous Lemma that elements with property $(\star)$ form a field, so in particular all elements of $\frac{Y-Y}{Y-Y}$  satisfy $(\star)$.

        For the other direction, fix $x\in F$ satisfying $(\star)$. Consider the  function 
        \[\phi_x:Y\times Y \rightarrow xY+Y
        \text{ given by }
        (y,y')\mapsto xy+y'.\]
        Clearly, $\phi_x$ is definable over $\{x\}$.
        The property $(\star)$ implies  $\delta(xY+Y)=\delta(Y)<2\delta(Y)=\delta(Y\times Y)$, and, in particular, $\phi_x$ is not injective. Thus, there are $a,b,c,d\in Y$ with $a\neq c$, such that $ax+b=cx+d$. After rearranging, we have $x=\frac{d-b}{a-c}\in \frac{Y-Y}{Y-Y}$.
    \end{proof}
\end{section}
\begin{proof}[Proof of Theorem~\ref{nonsym-main}]
    By the previous two lemmas, $\frac{Y-Y}{Y-Y}$ is a type definable field. By a theorem of Wagner \cite{Wagner}(see Lemma~\ref{fwagner}), it is definable over the same set of parameters. It is left to show that $\delta\left(\frac{Y-Y}{Y-Y}\right)=\delta(Y)$. By the sumset estimate, $\delta(YY-Y)=\delta(Y)$ implies that $\delta(2YY-2YY)=\delta(Y)$. Clearly, $(Y-Y)(Y-Y)\subset 2YY-2YY$. Thus $\delta((Y-Y)(Y-Y))\leq \delta(2YY-2YY)=\delta(Y) $. Applying the sumset estimate in the multiplicative group gives $\delta\left(\frac{Y-Y}{Y-Y}\right)\leq \delta(Y)$.
\end{proof}
Wagner's proof is straightforward, so we include it here.
\begin{lemma}[\cite{Wagner}]\label{fwagner}
     Assume $R\subseteq F$ is a type definable subfield with $0<\delta(R)<\infty$. Then $R$ is definable.
    \begin{proof}
        Let $X\supseteq R$ be a definable set with $\delta(X)< 2\delta(R)$. Choose a definable set  $Y$ with  $R\subseteq Y \subseteq X$, such that $Y+YY\subseteq X$. Fix $a\in Y$, and consider the map $R\times R\rightarrow X$ given by $(x,y)\mapsto x+ay$. This map cannot be injective by the choice of $X$ and $Y$, so $a\in \frac{R-R}{R-R} = R$. I.e. $R\subseteq Y \subseteq R$.  
    \end{proof}
\end{lemma}

\begin{theorem}\label{main}
    Let $F$ be a sufficiently saturated dimensional expansion of a field. Let $X$ be a type-definable over $A$ subset of $F$ satisfying $0<\delta(X+X)=\delta(XX)=\delta(X)<\infty$. Let $p$  be a generic type on $X$ over $A$. Then $\frac{p-p}{p-p}$ is a definable subfield, and $\delta\left(\frac{p-p}{p-p}\right)=\delta(X)$.
\end{theorem}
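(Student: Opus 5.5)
The plan is to reduce Theorem~\ref{main} to Theorem~\ref{nonsym-main}. The link is the set $Y=a^{-1}p$ for a fixed realization $a\models p$, as announced in the introduction. The key input is Theorem~\ref{KT} applied with $q=p$, which gives $\delta(pp-pp)=\delta(X)$.

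First I would fix $a\models p$ and check that $a\neq 0$. Since $p$ is generic on $X$, we have $\delta(p)=\delta(X)>0$, so $p$ is not the type of $0$ (a finite set has dimension $0$). Set $Y=a^{-1}p=\{a^{-1}y: y\models p\}$, which is type-definable over $Aa$. Multiplication by $a^{-1}$ is an $Aa$-definable bijection of $F$, so axiom (I)(5), with singleton fibres of dimension $0$, gives $\delta(Y)=\delta(p)=\delta(X)$. Here I use that $\delta$ of a partial type is the infimum over definable supersets, and that the bijection carries definable supersets of $p$ to definable supersets of $Y$ and back. In particular $0<\delta(Y)<\infty$.

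Next I would bound $\delta(YY-Y)$. We have
\[
YY-Y=a^{-2}pp-a^{-1}p=a^{-2}(pp-ap)\subseteq a^{-2}(pp-pp),
\]
since $a\models p$. Hence $\delta(YY-Y)\le \delta(pp-pp)=\delta(X)=\delta(Y)$ by Theorem~\ref{KT}. For the reverse inequality, $YY-Y$ contains a translate of $-Y$: fix any $y_0y_0'\in YY$ and consider $y_0y_0'-Y$. So $\delta(YY-Y)\ge \delta(Y)$, and therefore $\delta(YY-Y)=\delta(Y)$.

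Now Theorem~\ref{nonsym-main}, applied over the parameter set $Aa$, shows that $\frac{Y-Y}{Y-Y}$ is a definable subfield of dimension $\delta(Y)=\delta(X)$. Finally, $Y-Y=a^{-1}(p-p)$, so the factors $a^{-1}$ cancel in the quotient and
\[
\frac{Y-Y}{Y-Y}=\frac{p-p}{p-p}
\]
as sets; this gives the conclusion.

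I expect no real obstacle, because the heavy lifting is done by Theorem~\ref{KT} and Theorem~\ref{nonsym-main}. The points needing care are bookkeeping. One is that dimension is preserved under the definable bijection $x\mapsto a^{-1}x$ when applied to a type-definable set. The other is that passing to the parameter set $Aa$ is harmless: Theorem~\ref{nonsym-main} makes no genericity demand on $Y$, and definability over $Aa$ is still definability.
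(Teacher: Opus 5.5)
Your proposal is correct and is essentially the paper's proof. The paper also fixes $a\models p$, sets $Y=a^{-1}p$, and uses Theorem~\ref{KT} to get $\delta(pp-pp)=\delta(X)$. It then sandwiches $\delta(Y)\le\delta(YY-Y)\le\delta(a^{-2}(pp-pp))$, phrased via $1\in Y$, which is your observation $ap\subseteq pp$, and concludes from Theorem~\ref{nonsym-main} and $\frac{Y-Y}{Y-Y}=\frac{p-p}{p-p}$. Your extra bookkeeping ($a\neq 0$, invariance of $\delta$ under $x\mapsto a^{-1}x$, working over $Aa$) only makes explicit what the paper leaves implicit.
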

\begin{proof}
    Fix $a\models p$, and define $Y = a^{-1}p$. By Lemma \ref{KT}, $\delta(pp-pp)=\delta(X)=\delta(Y)$. But $\delta(YY-YY)=\delta(a^2(pp-pp))=\delta(Y)$. As $1\in Y$, $\delta(Y)\leq \delta(YY-Y)\leq \delta(YY-YY)=\delta(Y)$. The rest follows from Theorem \ref{nonsym-main}, noticing that $\frac{Y-Y}{Y-Y}=\frac{p-p}{p-p}$.
\end{proof}
Using the coarse pseudo-finite dimension, we can derive a finitary result.
\begin{corollary}\label{maincor}
    For any $r\in (0,1)$, there is an $s \in (0,1)$ and $N\in \N$ such that for any subset $A\subseteq F$ of a field $F$, if $\max(|A+A|,|AA|)\leq |A|^{1+s}$ and $|A|\geq N$, then there is a subfield $E\subseteq F$ satisfying $|A|^{1-r}\leq |E|\leq A^{1+r}$. Moreover, we can require that $E = \frac{A'-A'}{A'-A'}$ for some $A'\subseteq A$ with $|A'|\geq |A|^{1-r}$
\end{corollary}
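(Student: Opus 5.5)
We argue by contradiction, via ultraproducts and Hrushovski's coarse pseudo-finite dimension (Example~\ref{ex:pfdim}), and then apply Theorem~\ref{main}. Fix $r\in(0,1)$ and suppose no $s,N$ works. Then for each $i\in\mathbb N$ (taking $s=1/i$, $N=i$) there are a field $F_i$ and a finite $A_i\subseteq F_i$ with $|A_i|\geq i$ and $\max(|A_i+A_i|,|A_iA_i|)\leq |A_i|^{1+1/i}$. We also require that no subfield $E\subseteq F_i$ of the form $\frac{A'-A'}{A'-A'}$, with $A'\subseteq A_i$ and $|A'|\geq |A_i|^{1-r}$, satisfies $|A_i|^{1-r}\leq |E|\leq |A_i|^{1+r}$. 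Failure of the weaker first statement gives the same setup.

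Take a nonprincipal ultrafilter $\mathcal U$ and form $\mathbf F=\prod_{\mathcal U}F_i$ with the internal set $\mathbf A=\prod_{\mathcal U}A_i$. Let $\delta=\delta_{\mathbf A}$, so $\delta(\mathbf A)=1$. Following Example~\ref{ex:pfdim}, expand the language by countably many predicates naming the internal sets we need, including $\mathbf A$, so that $\delta$ is continuous. Then pass to a sufficiently saturated elementary extension, where $\delta$ extends uniquely. In this structure $X=\mathbf A$ is definable over $\emptyset$, $\delta(X)=1$, and $\lim_{\mathcal U}\log_{|A_i|}|A_i+A_i|\le\lim(1+1/i)=1$, so $\delta(X+X)=\delta(XX)=1$.

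By Theorem~\ref{main}, for a generic type $p$ on $X$, the set $K=\frac{p-p}{p-p}$ is a definable subfield with $\delta(K)=1$. Since $p$ is generic and has dimension $1$, for every $t<1$ some formula of $p$ gives a definable set of dimension at least $1-t$.

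The main obstacle is to pull this back to the finite level with the required shape. By compactness and the definability of $K$, there is a single definable $D\in p$ (say $D\subseteq X$) with $\frac{p-p}{p-p}=\frac{D-D}{D-D}=K$. The argument runs as follows. $K$ is a definable field containing $\frac{p-p}{p-p}$. The type-definable set $\frac{p-p}{p-p}$ is the intersection of $\frac{D-D}{D-D}$ over $D\in p$, and that set equals $K$ when $K$ is definable. So for a small definable $W\supseteq K$ some $D$ gives $\frac{D-D}{D-D}\subseteq W$, and applying the Wagner-type argument of Lemma~\ref{fwagner} forces the right equality; this step needs care. Also $\delta(D)=1>1-r$.

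The structure includes the field operations and the predicates, and by continuity (or by working in the ultraproduct before saturating, since $p$ is realized by an internal-type argument) we may take $D$ to be internal, $D=\prod_{\mathcal U}D_i$. If we work in the saturated extension, we need $D$ defined with parameters from the ultraproduct; this is arranged by taking $A=\emptyset$ and $p$ generic over $\emptyset$. By Łoś's theorem, for $\mathcal U$-most $i$ the set $E_i=\frac{D_i-D_i}{D_i-D_i}$ is a subfield of $F_i$, because "$K$ is a field" is first order. Moreover $\delta(D)=1$ gives $|D_i|\ge|A_i|^{1-r}$, and $\delta(K)=1$ gives $|A_i|^{1-r}\le|E_i|\le|A_i|^{1+r}$ for $\mathcal U$-most $i$. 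Setting $A'=D_i\subseteq A_i$ contradicts the choice of the $A_i$. The delicate points are that the subfield is witnessed by a single definable $D$ in $p$, and that the parameters can be taken internal.
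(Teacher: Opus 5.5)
Your proposal is correct and follows the same route as the paper: contradiction, an ultraproduct with the coarse dimension $\delta_{\mathbf A}$, the main theorem applied over $\emptyset$, then \L o\'s. It is in fact more careful than the paper on the ``moreover'' clause, since you extract a single $\emptyset$-definable $D\in p$ with $D\subseteq X$ and $\frac{D-D}{D-D}=K$, which is what justifies taking $A'=D_i\subseteq A_i$; and the step you flag as delicate needs no Wagner-type argument, because taking $W=K$ in your compactness step already gives some $D\in p$ with $\frac{D-D}{D-D}\subseteq K$, while the reverse inclusion $K=\frac{p-p}{p-p}\subseteq\frac{D-D}{D-D}$ holds simply because $p\subseteq D$.
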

\begin{proof}
    Assume not, then there is an $r\in (0,1)$ such that there are no choices of $N$ and $s$ as required. Fix such an $r$. For any positive integer $n$, pick $X_n\subseteq F_n$ with $|X_n|\geq n$ such that $|X_n+X_n|,|X_nX_n|\leq |X_n|^{1+\frac{1}{n}}$, and there is no subfield $E\subseteq F_n$ satisfying $|X_n|^{1-r}\leq |E|\leq |X_n|^{1+r}$ with $E = \frac{A'-A'}{A'-A'}$ for some $A'\subseteq F_n$, $|A'|\geq |X_n|^{1-r}$ .

    Fix a non-principal ultrafilter $\mathcal{U}$ on $\omega$. Let 
    \[(\mathbf{F},\, \mathbf{X} ,\,+,\,\cdot)= \prod_{n\rightarrow \mathcal{U}}(F_n,\, X_n,\, +,\, \cdot).\] 
    As in Example \ref{ex:pfdim}, let 
    \[\delta= \delta_\mathbf{X},\] and equip $(\mathbf{F},\, \mathbf{X} ,\,+,\,\cdot)$ with a first order structure in a countable language $L$ which makes $\delta$ continuous. By the choice of $X_n$, \[\delta(\mathbf{X}) = 1 = \delta(\mathbf{X}+\mathbf{X})=\delta(\mathbf{X}\mathbf{X}).\] Theorem \ref{main} implies that there is a definable $\mathbf{E}\subseteq \mathbf{F}$ such that $\delta(\mathbf{E}) =\delta(\mathbf{X})$. Let $\phi$ be a formula defining $\mathbf{E}$. By \L os's theorem, there are infinitely many $n$ such that $\phi(F_n)$ is a subfield of $F_n$ of size 
    \[|X_n|^{1-r}\leq |E_n|\leq |X_n|^{1+r},\] with $E_n = \frac{A_n-A_n}{A_n-A_n}$, where $A_n\subseteq F_n$ satisfies $|A_n|\geq |X_n|^{1-r}$,
    a contradiction.
\end{proof}
It is easy to see that the above corollary implies Theorem~\ref{erdosszeme}. It also implies the first item of Theorem~\ref{bkt}. 

Indeed, fix $\delta \in (0,1)$. Take $r=\delta$ in Corollary \ref{maincor}. Let $\epsilon=s>0$, and $N$ be given by the corollary. Fix a prime number $q$ with $q^\delta >N$ and take $C(\delta)>0$ be small enough so that $C(\delta)|A|^{1+\epsilon}\leq |A|$ for any $A\subseteq \Z/p\Z$, with $p\leq q$  prime. 

We show that $\epsilon$ and $C=C(\delta)$ work. Let $A\subseteq \Z/p\Z$, for some prime  $p$, and assume  $p^{\delta}\leq|A|$. Suppose $\max(|A+A|,|AA|)< C(\delta)|A|^{1+\epsilon}$. We need to show that $|A|\geq p^{1-\delta}$. By the choice of $C(\delta)$, $p^\delta>N$, and hence $|A|>N$. By the corollary, there is a subfield $E\subset \Z/p\Z$ with $|A|^{1-\delta}\leq|E|\leq |A|^{1+\delta}$. As $|A|> 1$, we must have $E = \Z/p\Z$. Hence $p^{\frac{1}{1+\delta}}\leq |A|$. But $\frac{1}{1+\delta}>1-\delta$, so $|A|\geq p^{1-\delta}$.

\bibliographystyle{plain} \bibliography{text} 
\end{document}

%% file: header.tex
\usepackage{amsmath,amsthm,amssymb,amsfonts, calligra,mathrsfs}
\usepackage[utf8]{inputenc}

\newcommand{\N}{\mathbb{N}}

\newcommand{\Z}{\mathbb{Z}}

\newcommand{\R}{\mathbb{R}}
\newcommand{\Q}{\mathbb{Q}}
\newcommand{\xxx}{\bar{x}}

\newcommand{\aaa}{\bar{a}}
\newcommand{\bbb}{\bar{b}}

\newcommand{\tp}{\operatorname{tp}}
\newcommand{\im}{\operatorname{Im}}
\newtheorem*{theorem*}{Theorem}
\newtheorem*{corollary*}{Corollary}
\newtheorem{theorem}{Theorem}[section]
\newtheorem{corollary}{Corollary}[theorem]
\newtheorem{proposition}{Proposition}[theorem]
\newtheorem{lemma}[theorem]{Lemma}

\theoremstyle{definition}
\newtheorem{example}[theorem]{Example}
\newtheorem{definition}[theorem]{Definition}
\newtheorem{remark}[theorem]{Remark}

\def\Ind#1#2{#1\setbox0=\hbox{$#1x$}\kern\wd0\hbox to 0pt{\hss$#1\mid$\hss}
\lower.9\ht0\hbox to 0pt{\hss$#1\smile$\hss}\kern\wd0}

\def\ind{\mathop{\mathpalette\Ind{}}}